\newtheorem{thm}{Theorem}
\newtheorem{lemma}{Lemma}
\theoremstyle{remark}
\newtheorem{remark}{Remark}
\DeclareMathOperator*{\wlim}{\mathit{w}-lim}
\DeclareMathOperator{\ReC}{Re}
\title{Perturbation of an $\alpha$-stable type stochastic process by a pseudo-gradient}
\author{Mykola Boiko, Mykhailo Osypchuk}
\date{\footnotesize{Vasyl Stefanyk Precarpathian National University,\\ 57, Shevchenko str, 76018 Ivano-Frankivsk, Ukraine}}
\begin{document}
\maketitle
\begin{abstract}
	We consider a Markov process defined by some pseudo-differential operator of an order $1<\alpha<2$ as the process generator. Using a pseudo-gradient operator, that is, the operator defined by the symbol $i\lambda|\lambda|^{\beta-1}$ with some $0<\beta<1$, the perturbation of the Markov process under consideration by the pseudo-gradient with a multiplier, which is integrable at some great enough power, is constructed. Such perturbation defines a  family of evolution operators,  properties of which are investigated. A corresponding Cauchy problem is considered.
		\footnotetext{\textbf{Keywords:} $\alpha$-stable process, perturbation, pseudo-gradient, pseudo-process\par
		\textbf{MSC2010:} 60G52}
	\end{abstract}

\section{Introduction}\label{BO_sec1}
Let $d$ be some fixed positive integer number. By  $\mathbb{R}^d$ we denote the real $d$-dimensional Euclidean space.  As usual, we denote by $(\cdot,\cdot)$ the inner product and by $|\cdot|$ the norm in $\mathbb{R}^d$ (we use the last notation for denoting the absolute value of a real number and the module of a complex number).

Let us consider a family of pseudo-differential operators $(A(t,x))_{t\ge0,x\in\mathbb{R}^d}$ defined by the symbols $(a(t,x,\lambda))_{\lambda\in\mathbb{R}^d}$ for every $t\ge0$, $x\in\mathbb{R}^d$. That is,
\[
A(t,x)f(x)=\frac{1}{(2\pi)^d}\int_{\mathbb{R}^d}a(t,x,\lambda)F(\lambda)e^{i(\lambda,x)}\,\mathrm{d}\lambda, \quad t\le0,\ x\in\mathbb{R}^d,
\]
where $F$ is the Fourier transform of the function $f$: $F(\lambda)=\int_{\mathbb{R}^d}f(x)e^{-i(\lambda,x)}\,\mathrm{d}x$, $\lambda\in\mathbb{R}^d$.
Note that the function $a$ might be complex valued. 

We assume the conditions from \cite[Ch. 4]{Kochubey} about the function $a$ and the operator $A$, that is, (we denote by $Lf(r,\cdot)(x)$ the result of the operator $L$ used on $f(r,x)$ as the function of $x$) 
\begin{enumerate}\renewcommand{\theenumi}{$A_\arabic{enumi}$}\renewcommand{\labelenumi}{\theenumi)}
	\item\label{A1} The function $a$ is homogeneous of a degree $\alpha\in(1,2)$ with respect to the variable $\lambda$ and $\ReC a(t,x,\lambda)\ge a_0>0$ for all $t\ge0$, $x, \lambda\in\mathbb{R}^d$, $|\lambda|=1$.
	\item\label{A2} The function $a$ has $N\ge 2d+3$ (it is some natural number) continuous derivatives in $\lambda$ for all $\lambda\neq0$, and
	\[
	|\partial^\varkappa a(t,x,\cdot)(\lambda)|\le C_{N}|\lambda|^{\alpha-|\varkappa|},
	\]
	\[
	|\partial^\varkappa[a(t,x,\cdot)-a(s,y,\cdot)](\lambda)|\le C_{N}\left(|x-y|^\gamma+|t-s|^{\gamma/\alpha}\right)|\lambda|^{\alpha-|\varkappa|}
	\]
	for all multi-indexes $\varkappa$ with $|\varkappa|\le N$, $x, y, \lambda\in\mathbb{R}^d$, $\lambda\neq0$, $s, t\in[0,+\infty)$. Here $\gamma\in(0,1)$ is some constant.
	\item\label{A3} In the representing 
	\[
	A(t,x)f(x)=\int_{\mathbb{R}^d}\Omega\left(t,x,\frac{h}{|h|}\right)\frac{f(x)-2f(x-h)+f(x-2h)}{|h|^{d+\alpha}}\,\mathrm{d}h,
	\]
	the function $\Omega(t,x,\cdot)$ is even and non-negative. 
\end{enumerate}
\begin{remark}
	Assumptions \ref{A1}, \ref{A2}, \ref{A3} coincide with assumptions ($A_{41}$), ($A_{42}$), ($A_{44}$) from \cite{Kochubey}, respectively (see pages 266 and 294 there).
\end{remark}

Theorem 4.3 from \cite{Kochubey} (see also Theorem 4.1 there) states that there exists a bounded non-terminating strict Markov process $(x(t))_{t\ge0}$ without second kind discontinuities and the fundamental solution $(g(s,x,t,y))_{0\le s<t, x, y\in\mathbb{R}^d}$ to the equation
\[
\frac{\partial}{\partial t}u(t,x)+A(t,x)u(t,\cdot)(x)=0,\quad t>0,\ x\in\mathbb{R}^d
\] 
is its transition probability density. The function $g$ can be constructed by the parametrix method (see \cite[Sec. 4.1.4]{Kochubey}). 

If the function $a$ is defined by the equality
\[
a(t,x,\lambda)=c|\lambda|^\alpha, \quad t\ge0,\ x\in\mathbb{R}^d,\ \lambda\in\mathbb{R}^d,
\]
where $c>0$ is some constant, the corresponding Markov process is an isotropic $\alpha$-stable process. The function $g$ can be presented by the equality
\[
g(s,x,t,y)=\frac{1}{(2\pi)^d}\int_{\mathbb{R}^d}\exp\{i(x-y,\lambda)-c(t-s)|\lambda|^\alpha\}\mathrm{d}\lambda
\]
in this case. The operator $A$ (it does not depend on $t$ and $x$) is the generator of this process.
This is the simplest example of the processes considered here. Therefore we name the process $(x(t))_{t\ge0}$ by \textit{the $\alpha$-stable type process}. 

Choosing some $0 < \beta \le1$, let us denote the $\beta$-order ``pseudo-gradient'' by $\nabla_\beta$, i.e., it is the operator which is defined by the symbol $(i\lambda|\lambda|^{\beta-1})_{\lambda \in \mathbb{R}^d}$. Note that $\nabla_\beta$ is a vector-valued operator.  In the case of $\beta = 1$, it is the ordinary gradient. We restrict ourselves to case $\beta\neq 1$ in this paper.

Our goal is to construct a perturbation of the considered process using the operator $\left(b, \nabla_ { \beta}\right)$, where $b$ is some measurable $\mathbb{R}^d$-valued function. Under the perturbation of the Markov process $(x(t))_{t\ge0}$ we understand
the construction of a two parametric family of operators $(\mathds{T}_{st})_{0\le s<t}$, defined on the set $C_{b}(\mathbb{R}^{d})$ of real-valued continuous bounded functions defined on $\mathbb{R}^{d}$, such that for every $\varphi\in C_{b}(\mathbb{R}^{d})$ and $t>0$, the function $u(s,x,t)=\mathds{T}_{st}\varphi(x)$ satisfies in some sense
the following Cauchy problem
\[
\frac{\partial}{\partial s}u(s,x,t)+L(s,x)u(s,\cdot,t)(x)=0,\quad 0\le s<t,\ x\in\mathbb{R}^d,
\]
\[
\lim_{s\uparrow t}u(s,x,t)=\varphi(x),\quad x\in\mathbb{R}^d,
\]
where $L(s,x)=A(s,x)+(b(s,x),\nabla_{\beta})$.

The problem of perturbation of Markov processes was and remains in the focus of attention of many researchers. The diffusion case, that is, if $\alpha=2$, was considered by M.~I.~Portenko (see, \cite{Portenko2, Portenko1} and the references there). The perturbation operators there is of the form $(b,\nabla)$, where the function $b$ belongs to some $L_p$-space of functions or is some generalized function of the delta function type. In \cite{Zhang}, the parabolic equation of the type $\nabla(A\nabla u)+B\nabla u-u_t=0$ was considered. The case $\alpha<2$ was considered in the works of K.~Bogdan and T.~Jakubowski \cite{Bogdan, Jakubowski}, who studied such a perturbation with the function $b$ from a Kato class.   S.~I.~Podolynny, M.~I.~Portenko \cite{PodolPort} and M.~I.~Portenko  \cite{Portenko3, Portenko5, Portenko4} investigated this perturbation with the function $b$ from $L_p$. In \cite{Jakubowski1}, $\alpha$-stable process was perturbed by the gradient operator with multiplier $b$ satisfying a certain integral space-time condition. Y.~Maekawa and H.~Miura were concerned with non-local parabolic equations in the presence of a divergence free drift term in \cite{MAEKAWA2013123}. J.-U.~Loebus and M.~I.~Portenko \cite{LobPort} perturbed the infinitesimal operator of a one-dimensional symmetric $\alpha$-stable process using the operator $(q\delta_0,\partial_{\alpha-1})$, where $\partial_{\alpha-1}$ is some pseudo-differential operator of the order $\alpha-1$. Perturbation of $\alpha$-stable process by an operator of fractional Laplacian was considered in \cite{ZhenWang}.
The results of $\alpha \in (1,2)$ and perturbation operators of the type $(b,\nabla_{\alpha - 1})$ can be found in \cite{Osypchuk2, Osypchuk1}. We studied the case of an $\alpha$-stable process and perturbation operators $(b,\nabla_{\beta})$ with a $\mathbb{R}^d$-valued time independent function $b$ from $L_p(\mathbb{R}^d)$ and $0<\beta<\alpha$ in \cite{OsBoiko}. 

This article is structured as follows. The next section is devoted to some auxiliary facts. The perturbation equation is solved in Section \ref{BO_sec3}. In Section \ref{BO_sec4}, we study some properties of the corresponding two-parameter evolutionary family of operators. The last section is devoted to constructing a solution to a corresponding Cauchy problem.

We will not use different notations for constants, if it is not important, but we will use the notations $C$. If we need to emphasize the dependence of the constant $C$ on parameters $\pi$, we will write $C_\pi$.

\section{Auxiliaries}\label{BO_sec2}
First of all let us note that if $0<\beta<1$ the operator $\nabla_{\beta}$ can be represented in the following integral form:
\begin{equation}\label{BO_eq1}
	\nabla_{\beta}f(x)=n_\beta\int_{\mathbb{R}^d} (f(x+y)-f(x))\frac{y}{|y|^{d+\beta+1}}\,\mathrm{d}y,
\end{equation}
where $n_\beta=-2^{-1}\pi^{-(d-1)/2}\Gamma\left(-\beta/2\right) \Gamma\left((d+\beta+1)/2\right)/\Gamma(-\beta)$ (here one have to use the equality $\Gamma(1+x)=x\Gamma(x)$ to expand the Euler gamma function $\Gamma$ to negative non-integer arguments). Representation \eqref{BO_eq1} is true if a function $f$ is at least Lipschitz continuous and bounded.  To obtain the normalizing factor $n_\beta$, it is sufficient to apply the operator $\nabla_\beta$ to the function $f_\lambda(x)=e^{i(\lambda,x)}$, $x\in\mathbb{R}^d$ for any $\lambda\in\mathbb{R}^d$. Moreover this is the way to prove \eqref{BO_eq1}.

Next, we will need some auxiliary statements. The following lemma proved in \cite{Kochubey} (see Lemma 1.11 there). It will be used frequently in this article.
\begin{lemma}\label{BO_lem1}
	The inequality ($0\le s<t$, $x, y\in\mathbb{R}^d$, remind that $1<\alpha<2$)
	\begin{align}
		\int_s^t\mathrm{d}\tau\int_{\mathbb{R}^d}\frac{(\tau-s)^{\lambda/\alpha}}{((\tau-s)^{1/\alpha}+|z-x|)^{d+l}}\frac{(t-\tau)^{\varkappa/\alpha}}{((t-\tau)^{1/\alpha}+|y-z|)^{d+k}}\,\mathrm{d}z\le \nonumber\\
		C\left[B\left(1+\frac{\varkappa-k}{\alpha},1+\frac{\lambda}{\alpha}\right)\frac{(t-s)^{1+(\varkappa+\lambda-k)/\alpha}}{((t-s)^{1/\alpha}+|y-x|)^{d+l}}\right.+ \nonumber\\
		\left.B\left(1+\frac{\varkappa}{\alpha},1+\frac{\lambda-l}{\alpha}\right)\frac{(t-s)^{1+(\varkappa+\lambda-l)/\alpha}}{((t-s)^{1/\alpha}+|y-x|)^{d+k}}\right],	\label{BO_eq2}
	\end{align}
	holds with some constant $C>0$ that depends only on $d$, $\alpha$, $k$ and $l$ for all $\varkappa$, $\lambda$, $k$, $l$, satisfying the inequalities:  $0<k<\alpha+\varkappa$, $0<l<\alpha+\lambda$. Here $B(\cdot,\cdot)$ is the Euler beta function.	
\end{lemma}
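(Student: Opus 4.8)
The plan is to factor the double integral into a purely spatial convolution estimate followed by a one-dimensional integration in the time variable $\tau$. Write $\rho=|y-x|$, and for fixed $\tau\in(s,t)$ abbreviate $\mu=(\tau-s)^{1/\alpha}$ and $\nu=(t-\tau)^{1/\alpha}$. The first step is to establish the spatial bound
\[
\int_{\mathbb{R}^d}\frac{\mathrm{d}z}{(\mu+|z-x|)^{d+l}(\nu+|y-z|)^{d+k}}\le C\left[\frac{1}{\mu^{l}(\nu+\rho)^{d+k}}+\frac{1}{\nu^{k}(\mu+\rho)^{d+l}}\right],
\]
valid for $k,l>0$. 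To prove this I would split $\mathbb{R}^d$ using the triangle inequality $\rho\le|z-x|+|y-z|$: on the set where $|y-z|\ge\rho/2$ one has $\nu+|y-z|\ge\frac12(\nu+\rho)$, so that factor may be pulled out and the remaining integral $\int_{\mathbb{R}^d}(\mu+|z-x|)^{-(d+l)}\,\mathrm{d}z=C\mu^{-l}$ evaluated directly; the complementary set where $|z-x|\ge\rho/2$ is treated symmetrically and produces the second term.

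Multiplying this spatial bound by the numerator $\mu^{\lambda}\nu^{\varkappa}=(\tau-s)^{\lambda/\alpha}(t-\tau)^{\varkappa/\alpha}$ reduces the lemma to estimating the two time integrals
\[
\int_s^t\frac{(\tau-s)^{(\lambda-l)/\alpha}(t-\tau)^{\varkappa/\alpha}}{((t-\tau)^{1/\alpha}+\rho)^{d+k}}\,\mathrm{d}\tau,\qquad \int_s^t\frac{(\tau-s)^{\lambda/\alpha}(t-\tau)^{(\varkappa-k)/\alpha}}{((\tau-s)^{1/\alpha}+\rho)^{d+l}}\,\mathrm{d}\tau,
\]
which by the change of variables $\tau\mapsto s+t-\tau$ are of the same structural type; I focus on the first, aiming for the second summand of the claimed bound. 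The target denominator carries $(t-s)^{1/\alpha}+\rho$ rather than $(t-\tau)^{1/\alpha}+\rho$, so the estimate cannot be read off termwise and I would distinguish the two regimes $\rho>(t-s)^{1/\alpha}$ and $\rho\le(t-s)^{1/\alpha}$. When $\rho$ is comparable to or larger than $(t-s)^{1/\alpha}$ the regularizing denominator is essentially the constant $\rho^{d+k}\asymp((t-s)^{1/\alpha}+\rho)^{d+k}$, and the integral collapses to the Beta integral $\int_s^t(\tau-s)^{(\lambda-l)/\alpha}(t-\tau)^{\varkappa/\alpha}\,\mathrm{d}\tau=(t-s)^{1+(\varkappa+\lambda-l)/\alpha}B\!\left(1+\tfrac{\varkappa}{\alpha},1+\tfrac{\lambda-l}{\alpha}\right)$, whose finiteness is guaranteed precisely by the hypotheses $k<\alpha+\varkappa$ (forcing $\varkappa>-\alpha$) and $l<\alpha+\lambda$. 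In the opposite regime I would further split the $\tau$-interval at the instant $t-\tau=\rho^{\alpha}$, where $(t-\tau)^{1/\alpha}$ crosses $\rho$, bounding the denominator by $(t-\tau)^{(d+k)/\alpha}$ on the far part and by $\rho^{d+k}$ on the part adjacent to $t$, and then comparing the resulting powers of $\rho$ and $(t-s)^{1/\alpha}$ to recover the asserted exponent $1+(\varkappa+\lambda-l)/\alpha$.

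The main obstacle is exactly this last time integration. Retaining the $\rho$-regularization inside the denominator is essential: discarding it and bounding by $(t-\tau)^{-(d+k)/\alpha}$ would demand $k<\alpha+\varkappa-d$, far stronger than the hypothesis $k<\alpha+\varkappa$, so the endpoint analysis near $\tau=t$ (and, for the second integral, near $\tau=s$) must be carried out with the full kernel. Once both time integrals are matched to the two summands---the first yielding the $((t-s)^{1/\alpha}+\rho)^{d+k}$ term with coefficient $B(1+\varkappa/\alpha,1+(\lambda-l)/\alpha)$ and the second the $((t-s)^{1/\alpha}+\rho)^{d+l}$ term with coefficient $B(1+(\varkappa-k)/\alpha,1+\lambda/\alpha)$---adding them gives \eqref{BO_eq2} with a constant $C$ depending only on $d$, $\alpha$, $k$ and $l$, as claimed.
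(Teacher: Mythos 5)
First, a remark on the comparison: the paper does not prove this lemma at all --- it is quoted verbatim from Lemma 1.11 of the monograph cited as \cite{Kochubey} --- so the real question is whether your argument is sound. It is not. Your spatial convolution bound is correct and essentially sharp, but the strategy of integrating each of its two terms separately in $\tau$ fails, and no refinement of the subsequent time integration can save it. Concretely, take $x=y$ (so $\rho=0$); your first time integral becomes
\[
\int_s^t(\tau-s)^{(\lambda-l)/\alpha}(t-\tau)^{(\varkappa-d-k)/\alpha}\,\mathrm{d}\tau,
\]
which diverges at $\tau=t$ as soon as $d+k\ge\alpha+\varkappa$. The hypothesis only guarantees $k<\alpha+\varkappa$, and the regime $d+k\ge\alpha+\varkappa$ occurs in every application of the lemma in this paper (e.g.\ $\varkappa=0$, $k=\alpha-\gamma$, any $d\ge1$). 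The loss arises because for $\tau$ near $t$ the true spatial integral behaves like $\nu^{-k}(\mu+\rho)^{-(d+l)}$, whereas the term $\mu^{-l}(\nu+\rho)^{-(d+k)}$ of your bound exceeds it by the factor $((\mu+\rho)/(\nu+\rho))^{d}$; pairing each spatial term with the ``wrong'' endpoint singularity produces an intermediate quantity that is genuinely larger than the asserted right-hand side (indeed infinite at $\rho=0$). You half-recognize the danger (``the endpoint analysis near $\tau=t$ must be carried out with the full kernel''), but your proposed splitting at $t-\tau=\rho^{\alpha}$ is still an attempt to bound a divergent integral by a finite one.

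The repair is to apply the ``one factor is large'' trick to the full space--time quantities rather than to the spatial variables alone. Since $u\mapsto u^{1/\alpha}$ is subadditive for $\alpha\ge1$, one has $(\tau-s)^{1/\alpha}+(t-\tau)^{1/\alpha}\ge(t-s)^{1/\alpha}$, hence
\[
\bigl((\tau-s)^{1/\alpha}+|z-x|\bigr)+\bigl((t-\tau)^{1/\alpha}+|y-z|\bigr)\ \ge\ (t-s)^{1/\alpha}+|y-x|,
\]
so at every point $(\tau,z)$ at least one of the two parenthesized quantities is at least $\frac12\bigl((t-s)^{1/\alpha}+|y-x|\bigr)$. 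On the set where the second is large, replace the entire factor $((t-\tau)^{1/\alpha}+|y-z|)^{-(d+k)}$ by $2^{d+k}((t-s)^{1/\alpha}+|y-x|)^{-(d+k)}$, integrate the remaining kernel in $z$ to get $C(\tau-s)^{-l/\alpha}$, and the $\tau$-integral is exactly $B\bigl(1+\frac{\varkappa}{\alpha},1+\frac{\lambda-l}{\alpha}\bigr)(t-s)^{1+(\varkappa+\lambda-l)/\alpha}$, convergent because $l<\alpha+\lambda$ and $\varkappa>-\alpha$; the symmetric set gives the other summand with $B\bigl(1+\frac{\varkappa-k}{\alpha},1+\frac{\lambda}{\alpha}\bigr)$, convergent because $k<\alpha+\varkappa$. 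This keeps the dangerous exponent $d+k$ attached to the harmless factor $((t-s)^{1/\alpha}+|y-x|)^{-(d+k)}$ rather than to $(t-\tau)^{-(d+k)/\alpha}$, which is precisely what your two-stage factorization destroys.
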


The statement of the following lemma can be obtained from the results of \cite[Ch. 4]{Kochubey}).
\begin{lemma}\label{BO_lem2}
	If the assumptions \ref{A1}~--- \ref{A3} hold the transition probability density of the process $(x(t))_{t\ge0}$ have the following properties ($0\le s<t\le T$, $x, y\in\mathbb{R}^d$):
	the function $g$ is continuous differentiable with respect to $x\in\mathbb{R}^d$ and
	\begin{equation}\label{BO_eq3}
		|\partial^k g(s,\cdot,t,y)(x)|\le N_{k,T} \frac{(t-s)^{1-(\gamma+k)/\alpha}}{((t-s)^{1/\alpha}+|y-x|)^{d+\alpha-\gamma}},\quad k=0,1,
	\end{equation}
	where $\partial^k$ means some derivative of the order $k$;	
	\begin{align}
		|\nabla_\beta g(s,\cdot,t,y)(x)|\le N_{\beta,T}\left(\frac{1}{((t-s)^{1/\alpha}+|y-x|)^{d+\beta}}\right.+\nonumber\\
		\left.\frac{(t-s)^{1-\beta/\alpha}}{((t-s)^{1/\alpha}+|y-x|)^{d+\alpha-\gamma}}\right). \label{BO_eq4}	
	\end{align}
	Here, positive constants $N_{k,T}$ and $N_{\beta,T}$ can be depended on $T$.	
\end{lemma}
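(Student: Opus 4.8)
The plan is to obtain both displays from the parametrix representation of $g$ furnished by \cite[Ch.~4]{Kochubey}, namely $g=Z+Z\otimes Q$, where $Z$ is the parametrix (the fundamental solution of the equation whose symbol is frozen at $(t,y)$), $Q$ is the resolving kernel of the associated Volterra equation, and $\otimes$ denotes the space–time convolution $\int_s^t\mathrm{d}\tau\int_{\mathbb{R}^d}\cdots\,\mathrm{d}z$. Since the frozen symbol is homogeneous of order $\alpha$, the parametrix and its spatial derivatives satisfy $|\partial^k Z(s,\cdot,t,y)(x)|\le C(t-s)^{1-k/\alpha}((t-s)^{1/\alpha}+|y-x|)^{-d-\alpha}$, while the Hölder continuity from assumption~\ref{A2} forces the $\gamma$–loss in the kernel $Q$, so that the convolution term is controlled through Lemma~\ref{BO_lem1}. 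For \eqref{BO_eq3} I would differentiate the representation under the integral sign — the $x$–dependence sits only in the factor $Z(s,x,\tau,z)$ — then feed the $Z$– and $Q$–bounds into Lemma~\ref{BO_lem1}, checking that the generated exponents meet its admissibility conditions $0<k<\alpha+\varkappa$, $0<l<\alpha+\lambda$. Adding the parametrix part (decay $d+\alpha$) and the correction part (the weaker decay $d+\alpha-\gamma$) gives the continuous differentiability of $g$ in $x$ together with \eqref{BO_eq3}, which is the conveniently weakened record of these sharper parametrix bounds.

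The substantive part is \eqref{BO_eq4}. I would start from the integral representation \eqref{BO_eq1} of $\nabla_\beta$ applied to $g(s,\cdot,t,y)$ and split the $h$–integral at the intrinsic scale $\rho=(t-s)^{1/\alpha}$. On the inner region $|h|\le\rho$ I would write $g(s,x+h,t,y)-g(s,x,t,y)=\int_0^1(\nabla g(s,x+\theta h,t,y),h)\,\mathrm{d}\theta$ and bound the gradient; here $|h|\le\rho$ keeps $(t-s)^{1/\alpha}+|x+\theta h-y|$ comparable to $(t-s)^{1/\alpha}+|x-y|$, so the gradient factor leaves the integral and there remains $\int_{|h|\le\rho}|h|^{1-d-\beta}\,\mathrm{d}h$, which converges precisely because $\beta<1$ and produces the factor $\rho^{1-\beta}$; combined with the gradient bound this reproduces the second term of \eqref{BO_eq4}. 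On the outer region $|h|>\rho$ I would use the oddness of $h|h|^{-d-\beta-1}$ to discard the $g(s,x,t,y)$ contribution, whose integral vanishes, and then estimate $\int_{|h|>\rho}|g(s,x+h,t,y)|\,|h|^{-d-\beta}\,\mathrm{d}h$, the integral converging at infinity because $\beta>0$; after the substitution $w=x+h$ this is a spatial convolution of the $\beta$–singular kernel centred at $x$ against the density peaked at $y$.

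The main obstacle is exactly this outer convolution, from which both target terms of \eqref{BO_eq4} must be read off uniformly in the regimes $|x-y|\lesssim(t-s)^{1/\alpha}$ and $|x-y|\gg(t-s)^{1/\alpha}$. I would split its domain according to whether $w$ lies nearer $y$ or $x$: on $\{|w-y|\le\tfrac12|x-y|\}$ the factor $|w-x|^{-d-\beta}$ is comparable to $|x-y|^{-d-\beta}$ while the residual mass of $g$ integrates to a bounded quantity, producing the first term $((t-s)^{1/\alpha}+|y-x|)^{-d-\beta}$; on $\{|w-x|\le\tfrac12|x-y|\}$ the density factor is comparable to its value at distance $|x-y|$ and the truncated radial integral $\int_\rho^\infty r^{-1-\beta}\,\mathrm{d}r\asymp\rho^{-\beta}$ supplies the power $(t-s)^{-\beta/\alpha}$, which combines with the $(t-s)$–prefactor of $g$ to give the second term. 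A delicate point is that both prefactors, $1$ and $(t-s)^{1-\beta/\alpha}$, must emerge exactly; to secure them one works with the parametrix estimates of \cite[Ch.~4]{Kochubey} in their sharper form, carrying the time exponent $1-k/\alpha$ rather than the weaker $1-(\gamma+k)/\alpha$ recorded in \eqref{BO_eq3}, and it is the faithful tracking of these prefactors through both regions — not merely the spatial decay — that concentrates the bulk of the computation. Collecting the inner and outer estimates then yields \eqref{BO_eq4}.
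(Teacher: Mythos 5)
Your treatment of \eqref{BO_eq3} is essentially the paper's: the parametrix decomposition $g=g_0+h$, the sharp bounds on $\partial^k g_0$ and on the resolving kernel $\Phi$, and Lemma~\ref{BO_lem1} for the convolution term. For \eqref{BO_eq4}, however, you take a genuinely different route. The paper never applies the real-space formula \eqref{BO_eq1} to the full density: it computes $\nabla_\beta g_0$ in Fourier variables, scales out $(t-s)$ via the homogeneity of $a(t,y,\cdot)$, and invokes \cite[Lemma 4.2]{Kochubey} to get the $(d+\beta)$-decay of $\nabla_\beta g_0$ in one stroke; the correction $\nabla_\beta h$ is then handled by justifying the interchange of $\nabla_\beta$ with the space--time convolution (formula \eqref{BO_eq1} is used only as a tool in that Fubini argument) and feeding \eqref{BO_eq6}, \eqref{BO_eq8} into Lemma~\ref{BO_lem1}. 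You instead stay in physical space, splitting the singular integral at the scale $(t-s)^{1/\alpha}$, with the mean-value theorem inside and oddness plus size bounds outside. This is viable and somewhat more robust (it needs only two-sided pointwise bounds on $g$ and $\nabla g$, not a Fourier multiplier lemma), at the price of a heavier case analysis; the paper's route gets the delicate first term of \eqref{BO_eq4} essentially for free. Two bookkeeping points must be made explicit for your version to close. First, as you yourself flag, the recorded bounds \eqref{BO_eq3} are too weak: with the prefactor $(t-s)^{1-(\gamma+k)/\alpha}$ your computation yields $(t-s)^{1-(\beta+\gamma)/\alpha}$ in place of $(t-s)^{1-\beta/\alpha}$ in the second term of \eqref{BO_eq4}, which genuinely fails as $s\uparrow t$; you must carry the decomposition $g=g_0+h$ through the whole argument, using $|\partial^k g_0|\le C(t-s)\,((t-s)^{1/\alpha}+|y-x|)^{-d-\alpha-k}$ for the parametrix part and the prefactor $(t-s)^{1-k/\alpha}$ with decay $d+\alpha-\gamma$ for the correction part. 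Second, once you do so, the parametrix contributions (inner and outer) land in the \emph{first} term of \eqref{BO_eq4}, not the second as your inner-region narrative suggests: for instance the inner region gives $(t-s)^{1+(1-\beta)/\alpha}((t-s)^{1/\alpha}+|y-x|)^{-d-\alpha-1}\le ((t-s)^{1/\alpha}+|y-x|)^{-d-\beta}$, while only the $\gamma$-lossy correction produces the second term. With that sorting the exponents match and your argument is correct.
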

\begin{proof}
	Let us note that the function $g$ can be constructed by the parametrix method (see \cite[Th. 4.1]{Kochubey}). That is, it can be presented by the following equality
	\[
	g(s,x,t,y)=g_0(s,x,t,y)+h(s,x,t,y),
	\]
	where 
	\[
	g_0(s,x,t,y)=\frac{1}{(2\pi)^d}\int_{\mathbb{R}^d}\exp\{i(x-y,\lambda)-a(t,y,\lambda)(t-s)\}\mathrm{d}\lambda,
	\]
	\begin{equation}\label{BO_eq5}
		h(s,x,t,y)=\int_s^t\mathrm{d}\tau\int_{\mathbb{R}^d}g_0(s,x,\tau,z)\Phi(\tau,z,t,y)\mathrm{d}z,
	\end{equation}
	and the function $\Phi$ is determined by some integral equation.
	
	Theorem 4.1 cited above states that the following inequalities hold
	\[
	|h(s,x,t,y)|\le C\left(\frac{(t-s)^{1+\gamma/\alpha}}{((t-s)^{1/\alpha}+|y-x|)^{d+\alpha}}+\frac{t-s}{((t-s)^{1/\alpha}+|y-x|)^{d+\alpha-\gamma}}\right),
	\]
	\begin{equation}\label{BO_eq6}
		|\Phi(s,x,t,y)|\le C\frac{1}{((t-s)^{1/\alpha}+|y-x|)^{d+\alpha-\gamma}}.
	\end{equation}
	The function $g_0$ satisfies the inequality (see \cite[eq. (4.1.25)]{Kochubey})
	\begin{equation}\label{BO_eq7}
		|\partial^k g_0(s,\cdot,t,y)(x)|\le N_k \frac{t-s}{((t-s)^{1/\alpha}+|y-x|)^{d+\alpha+k}}
	\end{equation}
	in which $\partial^k$ means some derivative of the integer order $0\le k\le N-2d-1$ (the constant $N$ is defined in assumption \ref{A2}), $N_k>0$ are some constants. Note that $k$ can be at least 0, 1, or 2.
	
	Using \eqref{BO_eq2} we can obtain the following inequality valid for $k=0$ or $1$
	\begin{align}
		\int_s^t\mathrm{d}\tau\int_{\mathbb{R}^d}\left|\partial^k g_0(s,\cdot,\tau,z)(x)\Phi(\tau,z,t,y)\right|\mathrm{d}z\le \nonumber\\
		C_k 	\int_s^t\mathrm{d}\tau\int_{\mathbb{R}^d}\frac{\tau-s}{((\tau-s)^{1/\alpha}+|z-x|)^{d+\alpha+k}}\frac{1}{((t-\tau)^{1/\alpha}+|y-z|)^{d+\alpha-\gamma}}\,\mathrm{d}z\le\nonumber\\
		C_k \left(\frac{(t-s)^{1-k/\alpha}}{((t-s)^{1/\alpha}+|y-x|)^{d+\alpha+k}}+\frac{(t-s)^{1+\gamma/\alpha}}{((t-s)^{1/\alpha}+|y-x|)^{d+\alpha-\gamma}}\right)\le  \nonumber\\
		C_{k} \frac{(t-s)^{1-k/\alpha}}{((t-s)^{1/\alpha}+|y-x|)^{d+\alpha-\gamma}},\nonumber
	\end{align}
	where $C_k>0$ are some constants. Therefore (use \eqref{BO_eq7})
	\begin{align}
		|\partial^k g(s,\cdot,t,y)(x)|\le \max(N_k,C_k)\frac{(t-s)^{1-k/\alpha}}{((t-s)^{1-(k+\gamma)/\alpha}+|y-x|)^{d+\alpha+-\gamma}}\times  \nonumber\\ \left[\frac{(t-s)^{(k+\gamma)/\alpha}}{((t-s)^{1/\alpha}+|y-x|)^{k+\gamma}}+(t-s)^{\gamma/\alpha}\right]\le \nonumber\\
		C_{k,T}\frac{(t-s)^{1-(\gamma+k)/\alpha}}{((t-s)^{1/\alpha}+|y-x|)^{d+\alpha-\gamma}}\nonumber,
	\end{align}
	for all $0\le s<t\le T$, $x, y\in\mathbb{R}^d$ and every $T>0$. Here the constants $C_{k,T}>0$ depend on $T$.
	
	For proving inequality \eqref{BO_eq4}, we use representation \eqref{BO_eq1} of the operator $\nabla_\beta$ (remind that $0<\beta<1$). So, since $a(t,x,\cdot)$ is a homogeneous function and 
	\begin{align}
		\nabla_\beta g_0(s,\cdot,t,y)(x)= \nonumber\\
		\frac{i}{(2\pi)^d}\int_{\mathbb{R}^d}\lambda|\lambda|^{\beta-1}\exp\{i(x-y,\lambda)-a(t,y,(t-s)^{1/\alpha}\lambda)\}\mathrm{d}\lambda= \nonumber\\
		\frac{i}{(2\pi)^d(t-s)^{(d+\beta)/\alpha}}\int_{\mathbb{R}^d}\lambda|\lambda|^{\beta-1}\exp\left\{i\left(\frac{x-y}{(t-s)^{1/\alpha}},\lambda\right)-a(t,y,\lambda)\right\}\mathrm{d}\lambda,\nonumber
	\end{align}
	the result of \cite[Lemma 4.2]{Kochubey} leads us to the inequality
	\begin{equation}
		\label{BO_eq8}
		|\nabla_\beta g_0(s,\cdot,t,y)(x)|\le C_\beta\frac{1}{((t-s)^{1/\alpha}+|y-x|)^{d+\beta}}
	\end{equation}
	valid for all $0\le s<t$, $x, y\in\mathbb{R}^d$ with some positive constant $C_\beta$.
	Inequalities \eqref{BO_eq7} allow us to state that the integral
	\[
	\int_{\mathbb{R}^d}\frac{u}{|u|^{d+\beta+1}}\mathrm{d}u\int_s^t\mathrm{d}\tau\int_{\mathbb{R}^d}(g_0(s,x+u,\tau,z)-g_0(s,x,\tau,z))\Phi(\tau,z,t,y)\,\mathrm{d}z
	\] 
	is absolutely convergent. Indeed, for $(\tau,z,u)\in(s,t)\times\mathbb{R}^d\times B_\varepsilon(0)$ and fixed $0\le s<t$, $x, y\in\mathbb{R}^d$ ($\varepsilon>0$ is small enough constant and by $B_\varepsilon(0)$ we denote the ball of the radius $\varepsilon$ and the center placed in the point $0$), we have
	\begin{align}
		R=\left|\frac{u}{|u|^{d+\beta+1}}(g_0(s,x+u,\tau,z)-g_0(s,x,\tau,z))\Phi(\tau,z,t,y)\right|\le\nonumber\\
		\frac{1}{|u|^{d+\beta-1}} \frac{N_1(\tau-s)}{((\tau-s)^{1/\alpha}+|z-x-\theta u|)^{d+\alpha+1}} \frac{C}{((t-\tau)^{1/\alpha}+|y-z|)^{d+\alpha-\gamma}},\nonumber
	\end{align}
	where $\theta=\theta(\tau,z)\in(0,1)$; if $(\tau,z,u)\in(s,t)\times\mathbb{R}^d\times B_\varepsilon(0)^C$, we have the inequality
	\begin{align}
		R\le\frac{N_0}{|u|^{d+\beta}} \left(\frac{\tau-s}{((\tau-s)^{1/\alpha}+|z-x-u|)^{d+\alpha}}+ \frac{\tau-s}{((\tau-s)^{1/\alpha}+|z-x|)^{d+\alpha}}\right)\times\nonumber\\
		\frac{C}{((t-\tau)^{1/\alpha}+|y-z|)^{d+\alpha-\gamma}}.\nonumber
	\end{align}
	The right-hand sides of these inequalities are integrable as the function of $(\tau,z,u)$ on the corresponding domains for every fixed $0\le s<t$ and $x,y\in\mathbb{R}^d$. Using the Fubini's theorem we can write down the equality
	\[
	\nabla_\beta h(s,\cdot,t,y)(x)=\int_s^t\mathrm{d}\tau\int_{\mathbb{R}^d}\nabla_\beta g_0(s,\cdot,\tau,z)(x)\Phi(\tau,z,t,y)\mathrm{d}z,
	\]
	valid for all $0\le s<t$, $x,y\in\mathbb{R}^d$.
	
	Now, use inequalities \eqref{BO_eq6}, \eqref{BO_eq8} and Lemma \ref{BO_lem1}. The result is as follows
	\begin{align}
		|\nabla_\beta h(s,\cdot,t,y)(x)|\le C\left(\frac{(t-s)^{\gamma/\alpha}}{((t-s)^{1/\alpha}+|y-x|)^{d+\beta}}\right.\nonumber\\
		+\left.\frac{(t-s)^{1-\beta/\alpha}}{((t-s)^{1/\alpha}+|y-x|)^{d+\alpha-\gamma}}\right),\nonumber
	\end{align}
	where $C$ is some positive constant. Combining this inequality with \eqref{BO_eq8} we obtain \eqref{BO_eq4}, where it is taken into account that $0\le s<t\le T$. The lemma is proved.
\end{proof}

\section{Perturbation equation solving}
\label{BO_sec3}
Let us consider the perturbation equation ($0\le s<t$, $x, y\in\mathbb{R}^d$)
\begin{equation}
	\label{BO_eq9}
	G(s,x,t,y)=g(s,x,t,y)+\int_s^t\mathrm{d}\tau \int_{\mathbb{R}^d} g(s,x,\tau,z)(b(\tau,z),\nabla_{\beta}G(\tau,\cdot,t,y)(z))\,\mathrm{d}z.
\end{equation}
Using the formal application of the operator $\nabla_{\beta}$ to both sides of equation \eqref{BO_eq9} with respect to the variable $x$ we can search for its solution in the form
\begin{equation}
	\label{BO_eq10}
	G(s,x,t,y)=g(s,x,t,y)+\int_s^t\mathrm{d}\tau \int_{\mathbb{R}^d} g(s,x,\tau,z)(b(\tau,z),v(\tau,z,t,y))\,\mathrm{d}z,
\end{equation}
where the function $(v(s,x,t,y))_{0\le s<t, x, y\in\mathbb{R}^d}$ is a solution to the following equation
\begin{equation}
	\label{BO_eq11}
	v(s,x,t,y)=v_0(s,x,t,y)+\int_s^t\mathrm{d}\tau \int_{\mathbb{R}^d} v_0(s,x,\tau,z)(b(\tau,z),v(\tau,z,t,y))\,\mathrm{d}z,
\end{equation}
in which
\begin{equation}
	\label{BO_eq12}
	v_0(s,x,t,y)=\nabla_{\beta}g(s,\cdot,t,y)(x).
\end{equation}

\begin{thm}\label{BO_th1}
	Let assumptions \ref{A1}~--- \ref{A3} hold and the $\mathbb{R}^d$-valued function $(b(t,x))_{t>0,x\in\mathbb{R}^d}$ belongs to $L_p([0,T]\times\mathbb{R}^d)$ for every $T>0$ with some $p>\frac{d+\alpha}{\alpha-1}$, including $p=+\infty$. Then the following statements are true: there exists a unique solution to equation \eqref{BO_eq11} in the class of functions satisfying the estimate
	\begin{equation}
		\label{BO_eq13}
		|v(s,x,t,y)|\le C_T\left(\frac{1}{((t-s)^{1/\alpha}+|x-y|)^{d+\beta}}+ \frac{(t-s)^{1-\beta/\alpha}}{((t-s)^{1/\alpha}+|x-y|)^{d+\alpha-\gamma}}\right);
	\end{equation}
	the function \eqref{BO_eq10} is a solution to equation \eqref{BO_eq9} and satisfies the estimate
	\begin{align}
		\label{BO_eq14}
		\nonumber	|G(s,x,t,y)|\le C_T\left(\frac{(t-s)^{\beta/\alpha}}{((t-s)^{1/\alpha}+|x-y|)^{d+\beta}}+\right.\\
		\left.\frac{(t-s)^{1-\gamma/\alpha}}{((t-s)^{1/\alpha}+|x-y|)^{d+\alpha-\gamma}}\right);
	\end{align}
	for all $0\le s<t\le T$, $x, y\in\mathbb{R}^d$ and every $T>0$, where a positive constant $C_T$ can be depended on $T$.
\end{thm}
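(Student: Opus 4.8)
The plan is to solve the integral equation \eqref{BO_eq11} by successive approximations and then to read off the properties of $G$ from its representation \eqref{BO_eq10}. Take $v_0$ as in \eqref{BO_eq12}; by Lemma \ref{BO_lem2}, inequality \eqref{BO_eq4}, this free term already satisfies the target bound \eqref{BO_eq13}. Define the iterates
\[
v_{n+1}(s,x,t,y)=\int_s^t\mathrm{d}\tau\int_{\mathbb{R}^d}v_0(s,x,\tau,z)\bigl(b(\tau,z),v_n(\tau,z,t,y)\bigr)\,\mathrm{d}z,\qquad n\ge0,
\]
and look for $v=\sum_{n\ge0}v_n$. The whole argument then rests on a single reproduction estimate for the operator $w\mapsto\int_s^t\mathrm{d}\tau\int v_0(s,x,\tau,z)(b(\tau,z),w(\tau,z,t,y))\,\mathrm{d}z$ acting on functions $w$ obeying a bound of the form \eqref{BO_eq13}.

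The core step is to show that this operator maps the class \eqref{BO_eq13} into itself while gaining a positive power of $(t-s)$. Since $b$ lies only in $L_p$, I would first apply H\"older's inequality in $(\tau,z)$ with the conjugate exponent $q=p/(p-1)$, extracting $\|b\|_{L_p([s,t]\times\mathbb{R}^d)}$ and leaving
\[
\left(\int_s^t\mathrm{d}\tau\int_{\mathbb{R}^d}|v_0(s,x,\tau,z)|^q\,|w(\tau,z,t,y)|^q\,\mathrm{d}z\right)^{1/q}.
\]
Inserting the two-term bounds \eqref{BO_eq4} and \eqref{BO_eq13}, raising them to the power $q$, and expanding the product yields finitely many integrals of exactly the shape handled by Lemma \ref{BO_lem1}, but with the decay exponents $d+\beta$ and $d+\alpha-\gamma$ replaced by $q(d+\beta)$ and $q(d+\alpha-\gamma)$ and with the time exponents multiplied by $q$. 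Here the hypothesis on $p$ enters precisely: $p>\frac{d+\alpha}{\alpha-1}$ gives $q<\frac{d+\alpha}{d+1}$, whence $q(d+\beta)<d+\alpha$ because $\beta<1$; this is the admissibility inequality $0<l<\alpha$ (with $\lambda=0$) required by Lemma \ref{BO_lem1} for the leading kernels, and it forces the gained power of $(t-s)$ to be strictly positive. Applying Lemma \ref{BO_lem1} and then taking the $1/q$-th root restores the decay orders $d+\beta$ and $d+\alpha-\gamma$, so the output again has the structure \eqref{BO_eq13}, multiplied by $\|b\|_{L_p}$ and by a factor $(t-s)^{\delta}$ with $\delta=\frac{d+\alpha-q(d+\beta)}{q\alpha}>0$.

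Iterating this estimate produces $|v_n(s,x,t,y)|\le (C_T\|b\|_{L_p})^{\,n}\,\Pi_n\cdot(\text{the bound \eqref{BO_eq13}})$, where $\Pi_n$ is a product of $n$ Beta functions arising from the successive time integrations; such products decay like $1/\Gamma(c\,n)$ for some $c>0$, so $\sum_n v_n$ converges absolutely and uniformly on $0\le s<t\le T$, $x,y\in\mathbb{R}^d$, its sum solves \eqref{BO_eq11}, and it inherits \eqref{BO_eq13}. Uniqueness follows along the same lines: the difference of two solutions in the class \eqref{BO_eq13} is reproduced by the operator, and iterating the reproduction estimate drives it to zero. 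To treat $G$, I would insert the constructed $v$ into \eqref{BO_eq10}; applying $\nabla_\beta$ in $x$ and comparing with \eqref{BO_eq11} gives $\nabla_\beta G=v$, so the right-hand side of \eqref{BO_eq9} reduces to \eqref{BO_eq10}, i.e. $G$ solves \eqref{BO_eq9}, the interchange of $\nabla_\beta$ with the integral being justified by the absolute-convergence argument already used in the proof of Lemma \ref{BO_lem2}. The bound \eqref{BO_eq14} then follows from the same H\"older-plus-Lemma \ref{BO_lem1} scheme applied to the integral in \eqref{BO_eq10}, now pairing the bound \eqref{BO_eq3} for $g$ (with $k=0$) against \eqref{BO_eq13} for $v$.

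The main obstacle I expect is the bookkeeping of the time exponents through the chain \emph{H\"older $\to$ Lemma \ref{BO_lem1} $\to$ $1/q$-th root}: one must confirm that after raising the kernels to the power $q$ and applying \eqref{BO_eq2} the emerging power of $(t-s)$ is genuinely positive, so that each iteration gains smallness, and that at the same time the cross terms coming from the two-term structure of \eqref{BO_eq4} and \eqref{BO_eq13} never violate the admissibility conditions $0<k<\alpha+\varkappa$, $0<l<\alpha+\lambda$ of Lemma \ref{BO_lem1}. Verifying that these exponents assemble into a product of Beta functions with the right asymptotics, and hence a convergent majorizing series, is the delicate point; the rest is the standard Neumann-series machinery.
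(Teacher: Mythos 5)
Your proposal follows essentially the same route as the paper: successive approximations for \eqref{BO_eq11}, H\"older's inequality with the conjugate exponent $q=p/(p-1)$ to extract $\|b\|_p^T$, Lemma \ref{BO_lem1} applied to the $q$-th powers of the kernels (your gained exponent $\delta$ coincides with the paper's $\theta=1-((d+\alpha)/p+\beta)/\alpha$), convergence via the decay of the resulting Beta-function products, uniqueness by iterating the homogeneous equation, and the identification $\nabla_\beta G=v$ (which the paper isolates as Lemma \ref{BO_lem3}) followed by the same H\"older-plus-Lemma \ref{BO_lem1} scheme for \eqref{BO_eq14}. The argument is correct and matches the paper's proof in all essential respects.
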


Before proving this theorem, let us prove the following auxiliary statement.
\begin{lemma}\label{BO_lem3}
	If the assumptions of Theorem \ref{BO_th1} hold and a $\mathbb{R}^d$-valued function $(v(s,x,t,y))_{0\le s<t,x, y\in\mathbb{R}^d}$ satisfies inequality \eqref{BO_eq13} then the equality
	\begin{align}
		\nabla_\beta \left(\int_s^t\mathrm{d}\tau\int_{\mathbb{R}^d}g(s,\cdot,\tau,z)(b(\tau,z),v(\tau,z,t,y))\,\mathrm{d}z\right)(x)= \nonumber\\ 
		\int_s^t\mathrm{d}\tau \int_{\mathbb{R}^d}\nabla_\beta g(s,\cdot,\tau,z)(x)(b(\tau,z),v(\tau,z,t,y))\,\mathrm{d}z\label{BO_eq15}
	\end{align}
	is fulfilled.	
\end{lemma}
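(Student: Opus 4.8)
The plan is to justify the interchange in \eqref{BO_eq15} by an application of Fubini's theorem, exactly along the lines of the corresponding step in the proof of Lemma \ref{BO_lem2}. Writing $I(s,x,t,y)$ for the inner double integral on the left-hand side of \eqref{BO_eq15} and applying the integral representation \eqref{BO_eq1} of $\nabla_\beta$ (legitimate since $0<\beta<1$ and $g$ is bounded and Lipschitz in $x$ by \eqref{BO_eq3}), the left-hand side becomes
\[
n_\beta\int_{\mathbb{R}^d}\frac{u}{|u|^{d+\beta+1}}\left(\int_s^t\mathrm{d}\tau\int_{\mathbb{R}^d}\bigl(g(s,x+u,\tau,z)-g(s,x,\tau,z)\bigr)(b(\tau,z),v(\tau,z,t,y))\,\mathrm{d}z\right)\mathrm{d}u.
\]
Once the outer $u$-integration can be interchanged with the $(\tau,z)$-integration, the inner $u$-integral reconstructs $\nabla_\beta g(s,\cdot,\tau,z)(x)$ through \eqref{BO_eq1}, which is precisely the right-hand side of \eqref{BO_eq15}. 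Thus it suffices to verify the absolute convergence of the triple integral
\[
J=\int_{\mathbb{R}^d}\frac{\mathrm{d}u}{|u|^{d+\beta}}\int_s^t\mathrm{d}\tau\int_{\mathbb{R}^d}\bigl|g(s,x+u,\tau,z)-g(s,x,\tau,z)\bigr|\,|b(\tau,z)|\,|v(\tau,z,t,y)|\,\mathrm{d}z.
\]

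I would split $J=J_1+J_2$, where $J_1$ is the contribution of a small ball $B_\varepsilon(0)$ and $J_2$ that of its complement. On $B_\varepsilon(0)$ I would apply the mean value theorem together with the derivative estimate \eqref{BO_eq3} for $k=1$, so that $|g(s,x+u,\tau,z)-g(s,x,\tau,z)|\le C|u|\,(\tau-s)^{1-(1+\gamma)/\alpha}((\tau-s)^{1/\alpha}+|z-x-\theta u|)^{-(d+\alpha-\gamma)}$ for some $\theta\in(0,1)$; the gained factor $|u|$ converts the kernel $|u|^{-(d+\beta)}$ into $|u|^{-(d+\beta-1)}$, which is integrable near the origin precisely because $\beta<1$ forces $d+\beta-1<d$. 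On $B_\varepsilon(0)^C$ I would instead estimate the two terms of the difference separately using \eqref{BO_eq3} for $k=0$, the factor $|u|^{-(d+\beta)}$ now being integrable at infinity.

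In both regions I would then insert the bound \eqref{BO_eq13} on $v$ and apply Hölder's inequality in the $(\tau,z)$ variables with exponents $p$ and $q=p/(p-1)$, using $b\in L_p([0,T]\times\mathbb{R}^d)$. The remaining space-time integrals, obtained after raising the product of the $g$- and $v$-kernels to the power $q$, are then estimated by Lemma \ref{BO_lem1}; the hypothesis $p>\frac{d+\alpha}{\alpha-1}$, equivalently $q<\frac{d+\alpha}{d+1}$, is exactly what places the relevant exponents within the constraints of that lemma and renders the resulting integrals finite. The main obstacle is this last piece of bookkeeping: one must verify that after the Hölder step every singularity --- the temporal factor near $\tau=s$ as well as the spatial kernels near $z=x$ and $z=y$ --- remains integrable, and it is here that the exponent condition on $p$ enters decisively; the near-origin behaviour of the $u$-integral, tamed by the mean value theorem, is the other delicate point. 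Assembling these bounds gives $J<\infty$, whereupon Fubini's theorem yields \eqref{BO_eq15}.
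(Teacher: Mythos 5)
Your plan is sound and the key ingredients are the right ones, but it takes a genuinely different route from the paper. You justify a single, global application of Fubini's theorem by proving absolute convergence of the full triple integral in $(u,\tau,z)$, taming the singularity at $u=0$ by the mean value theorem and the $k=1$ case of \eqref{BO_eq3} (which trades $|u|^{-(d+\beta)}$ for the integrable $|u|^{-(d+\beta-1)}$); this is exactly the strategy the paper uses for $\nabla_\beta h$ in Lemma \ref{BO_lem2}, transplanted to the present setting with $|b|$ handled by H\"older and the exponent condition $q<\frac{d+\alpha}{d+1}$ feeding into Lemma \ref{BO_lem1}. The paper's own proof of Lemma \ref{BO_lem3} instead avoids the small-$u$ region altogether: it introduces the truncated operators $\nabla_\beta^{(\varepsilon)}$ over the annuli $\varepsilon\le|u|\le\frac{1}{\varepsilon}$, where absolute integrability follows from the $k=0$ bound alone (no mean value theorem needed), applies Fubini on each annulus to get \eqref{BO_eq16}, and then passes to the limit $\varepsilon\to0+$, having separately checked that the inner function is bounded and Lipschitz in $x$ so that $\nabla_\beta^{(\varepsilon)}$ of it converges to $\nabla_\beta$ of it. Your approach buys a one-step argument with no limit passage; the paper's buys freedom from the shifted-center problem, which is the one place where your sketch needs real care: after the mean value theorem your kernel is centred at $x+\theta u$ with $\theta=\theta(\tau,z,u)$, so before H\"older and Lemma \ref{BO_lem1} can be applied in $(\tau,z)$ you must replace it by a kernel centred at $x$ (e.g.\ via $|z-x-\theta u|\ge|z-x|/2$ when $|z-x|\ge2|u|$, with a separate crude estimate on $|z-x|<2|u|$, where the same condition $q<\frac{d+\alpha}{d+1}$ keeps the $\tau$-integral finite). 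One further small correction: the representation \eqref{BO_eq1} on the left-hand side of \eqref{BO_eq15} is applied to the integrated function $\int_s^t\mathrm{d}\tau\int_{\mathbb{R}^d}g(s,\cdot,\tau,z)(b(\tau,z),v(\tau,z,t,y))\,\mathrm{d}z$, not to $g$ itself, so what must be verified as bounded and Lipschitz is that function; this follows from the $k=0,1$ estimates of \eqref{BO_eq3} combined with \eqref{BO_eq13}, H\"older and Lemma \ref{BO_lem1} (the paper carries this out explicitly), but it is not a consequence of the regularity of $g$ alone. With these two points supplied, your argument closes.
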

\begin{proof} Note that the integral
	\[
	\int_s^t\mathrm{d}\tau\int_{\mathbb{R}^d}\nabla_\beta g(s,\cdot,\tau,z)(x)(b(\tau,z),v(\tau,z,t,y))\,\mathrm{d}z
	\] 
	converges (it is sufficiently to use inequalities \eqref{BO_eq4}, \eqref{BO_eq13} and Lemma \ref{BO_lem1}). Indeed, for fixed $0\le s<t\le T$, $x, y\in\mathbb{R}^d$, we can write the following\footnote{Here and further, we use the obvious inequality $(f+g)^r\le 2^r(f^r+g^r)$, valid for all $r\ge0$ and non-negative functions $f$ and $g$, without any references.}
	\begin{align}
		\int_s^t\mathrm{d}\tau\int_{\mathbb{R}^d}|\nabla_\beta g(s,\cdot,\tau,z)(x)||b(\tau,z)||v(\tau,z,t,y)|\,\mathrm{d}z\le\nonumber\\
		\|b\|_p^TC_TN_{\beta,T}\times\nonumber\\
		\left(\int_s^t\mathrm{d}\tau\int_{\mathbb{R}^d} \left(\frac{1}{((\tau-s)^{1/\alpha}+|x-z|)^{d+\beta}}+ \frac{(\tau-s)^{1-\beta/\alpha}}{((\tau-s)^{1/\alpha}+|x-z|)^{d+\alpha-\gamma}}\right)^q\times\right.\nonumber\\
		\left.\left(\frac{1}{((t-\tau)^{1/\alpha}+|z-y|)^{d+\beta}}+ \frac{(t-\tau)^{1-\beta/\alpha}}{((t-\tau)^{1/\alpha}+|z-y|)^{d+\alpha-\gamma}}\right)^q\mathrm{d}z\right)^{1/q}	\le \nonumber\\
		C_T \left(
		\frac{(t-s)^{1/q-d/(\alpha p)-\beta/\alpha}}{((t-s)^{1/\alpha}+|x-y|)^{d+\beta}}+
		\frac{(t-s)^{1/q-d/(\alpha p)+(\gamma-\beta)/\alpha}}{((t-s)^{1/\alpha}+|x-y|)^{d+\beta}}+\right.\nonumber\\
		\left.
		\frac{(t-s)^{1+1/q-d/(\alpha p)-2\beta/\alpha}}{((t-s)^{1/\alpha}+|x-y|)^{d+\alpha-\gamma}}+ 
		\frac{(t-s)^{1+1/q-d/(\alpha p)-(2\beta-\gamma)/\alpha}}{((t-s)^{1/\alpha}+|x-y|)^{d+\alpha-\gamma}}
		\right)\le\nonumber\\ 
		C_T (t-s)^{1/q-d(1/p+1)/\alpha-2\beta/\alpha}.\nonumber
	\end{align}
	
	To prove the existence of left-hand side of \eqref{BO_eq15} we use representation \eqref{BO_eq1} of the operator $\nabla_\beta$. For fixed $0\le s<t\le T$ and $y\in\mathbb{R}^d$ the function 
	\[
	\left(\int_s^t\mathrm{d}\tau\int_{\mathbb{R}^d}g(s,x,\tau,z)(b(\tau,z),v(\tau,z,t,y))\,\mathrm{d}z \right)_{x\in\mathbb{R}^d}
	\]
	is bounded and Lipschitz continuous. Indeed, it is sufficient to prove a boundedness of the integral 
	\[
	\int_s^t\mathrm{d}\tau\int_{\mathbb{R}^d}\nabla_kg(s,\cdot,\tau,z)(x)(b(\tau,z),v(\tau,z,t,y))\,\mathrm{d}z,
	\]
	as the function of $x\in\mathbb{R}^d$ for fixed $0\le s<t\le T$ and $y\in\mathbb{R}^d$ in the cases of $\nabla_0=I$ (the identical operator) and $\nabla_1=\nabla$ (the gradient). 
	
	Using inequalities \eqref{BO_eq3}, \eqref{BO_eq13} and Lemma \ref{BO_lem1}, for all $x\in\mathbb{R}^d$, we can write down ($k=0,1$)
	\begin{align}
		\int_s^t\mathrm{d}\tau\int_{\mathbb{R}^d}|\nabla_k g(s,\cdot,\tau,z)(x)||b(\tau,z)||v(\tau,z,t,y)|\,\mathrm{d}z\le\nonumber\\
		\|b\|_p^TC_TN_{k,T}\left(\int_s^t\mathrm{d}\tau\int_{\mathbb{R}^d} \frac{(\tau-s)^{(1-(\gamma+k)/\alpha)q}}{((\tau-s)^{1/\alpha}+|x-z|)^{(d+\alpha-\gamma)q}}\times\right.\nonumber\\
		\left.\left(\frac{1}{((t-\tau)^{1/\alpha}+|z-y|)^{d+\beta}}+ \frac{(t-\tau)^{1-\beta/\alpha}}{((t-\tau)^{1/\alpha}+|z-y|)^{d+\alpha-\gamma}}\right)^q\mathrm{d}z\right)^{1/q}	\le \nonumber\\
		C_T \left(
		\frac{(t-s)^{1+1/q-d/(\alpha p)-(\gamma+\beta+k)/\alpha}}{((t-s)^{1/\alpha}+|x-y|)^{d+\alpha-\gamma}}+
		\frac{(t-s)^{1/q-d/(\alpha p)-k/\alpha}}{((t-s)^{1/\alpha}+|x-y|)^{d+\beta}}+\right.\nonumber\\
		\left.
		\frac{(t-s)^{1+1/q-d/(\alpha p)-(\beta+k)/\alpha}}{((t-s)^{1/\alpha}+|x-y|)^{d+\alpha-\gamma}}
		\right)\le C_T (t-s)^{1/q-d(1/p+1)/\alpha-(\beta+k)/\alpha},\nonumber
	\end{align}
	which is bounded for fixed $0\le s<t\le T$.
	
	Together with the operator $\nabla_{\beta}$, we consider a family of operators $\{\nabla_\beta^{(\varepsilon)}:\varepsilon>0\}$ acting on a function $(f(x))_{x\in\mathbb{R}^d}$ according to the following rule
	\[
	\nabla_\beta^{(\varepsilon)}\varphi(x)=n_\beta\int_{\varepsilon\le|y|\le\frac{1}{\varepsilon}} (f(x+y)-f(x))\frac{y}{|y|^{d+\beta+1}}\,\mathrm{d}y.
	\]
	It is clear that the following relation $\lim_{\varepsilon\to0+}\nabla_\beta^{(\varepsilon)}f(x)=\nabla_\beta f(x)$, $x\in\mathbb{R}^d$ holds for every bounded and Lipschitz continuous function $f$.
	
	Using inequalities \eqref{BO_eq3}, \eqref{BO_eq13} we can state, that the following inequality 
	\begin{align}
		\left|\frac{u}{|u|^{d+\beta+1}} (g(s,x+u,\tau,z)-g(s,x,\tau,z))(b(\tau,z),v(\tau,z,t,y))\right|\le \nonumber\\
		\frac{{N_{0,T}} C_T|b(\tau,z)|}{|u|^{d+\beta}}\times\nonumber\\
		\left(\frac{(\tau-s)^{1-\gamma/\alpha}}{((\tau-s)^{1/\alpha}+|z-x-u|)^{d+\alpha-\gamma}}+ \frac{(\tau-s)^{1-\gamma/\alpha}}{((\tau-s)^{1/\alpha}+|z-x|)^{d+\alpha-\gamma}}\right)\times\nonumber\\
		\left(\frac{1}{((t-\tau)^{1/\alpha}+|y-z|)^{d+\beta}}+ \frac{(t-\tau)^{1-\beta/\alpha}}{((t-\tau)^{1/\alpha}+|y-z|)^{d+\alpha-\gamma}}\right)
		\nonumber
	\end{align}
	is true for every $T>0$ and all $0\le s<\tau<t\le T$, $x, y, z, u\in\mathbb{R}^d$.
	
	Using H\"{o}lder's inequality, we can write down the following
	\begin{align}
		\int_s^t\hskip-5pt\mathrm{d}\tau\hskip-5pt\int_{\mathbb{R}^d}\left|(g(s,x+u,\tau,z)-g(s,x,\tau,z))(b(\tau,z),v(\tau,z,t,y))\right|\,\mathrm{d}z\le \nonumber\\
		\le 4C_T{N_{0,T}}\|b\|_p^T \nonumber\\
		\left[\int_s^t\hskip-5pt\mathrm{d}\tau\hskip-5pt\int_{\mathbb{R}^d}\frac{(\tau-s)^{(1-\gamma/\alpha)q}}{((\tau-s)^{1/\alpha}+|z-x-u|)^{(d+\alpha-\gamma)q}}
		\frac{1}{((t-\tau)^{1/\alpha}+|y-z|)^{(d+\beta)q}}\,\mathrm{d}z+\right. \nonumber\\
		\left.
		\int_s^t\hskip-5pt\mathrm{d}\tau\hskip-5pt\int_{\mathbb{R}^d}\frac{(\tau-s)^{(1-\gamma/\alpha)q}}{((\tau-s)^{1/\alpha}+|z-x|)^{(d+\alpha-\gamma)q}}
		\frac{1}{((t-\tau)^{1/\alpha}+|y-z|)^{(d+\beta)q}}\,\mathrm{d}z+\right.\nonumber\\
		\left.\int_s^t\hskip-5pt\mathrm{d}\tau\hskip-5pt\int_{\mathbb{R}^d}\frac{(\tau-s)^{(1-\gamma/\alpha)q}}{((\tau-s)^{1/\alpha}+|z-x-u|)^{(d+\alpha-\gamma)q}}
		\frac{(t-\tau)^{(1-\beta/\alpha)q}}{((t-\tau)^{1/\alpha}+|y-z|)^{(d+\alpha-\gamma)q}}\,\mathrm{d}z+\right. \nonumber\\
		\left.
		\int_s^t\hskip-5pt\mathrm{d}\tau\hskip-5pt\int_{\mathbb{R}^d}\frac{(\tau-s)^{(1-\gamma/\alpha)q}}{((\tau-s)^{1/\alpha}+|z-x|)^{(d+\alpha-\gamma)q}}
		\frac{(t-\tau)^{(1-\beta/\alpha)q}}{((t-\tau)^{1/\alpha}+|y-z|)^{(d+\alpha-\gamma)q}}\,\mathrm{d}z\right]^{1/q}, \nonumber
	\end{align}
	which holds for all $0\le s<t\le T$, $x, y, u\in\mathbb{R}^d$. Here the number $q\ge1$ is defined by the relation 
	$\frac{1}{p}+\frac{1}{q}=1$ (if $p=+\infty$ then $q=1$) and the norm of the function $b$ in the space $L_p([0,T]\times\mathbb{R}^d)$ is denoted by $\|b\|_p^T$.
	It is easy to see, that the right hand side integrals satisfy the conditions of Lemma \ref{BO_lem1}. Therefore these integrals are finite for all fixed $0\le s<t\le T$, $x, y, u\in\mathbb{R}^d$.
	As a consequence, the function
	\[
	\left((g(s,x+u,\tau,z)-g(s,x,\tau,z))(b(\tau,z),v(\tau,z,t,y))\frac{u}{|u|^{d+\beta+1}}\right)_{\tau\in(s,t),u,z\in\mathbb{R}^d}
	\]
	is integrable with respect to $(u,\tau,z)\in\{\varepsilon\le|u|\le\frac{1}{\varepsilon}\}\times(s,t)\times\mathbb{R}^d$ with small enough $\varepsilon>0$ for every fixed $0\le s<t\le T$, $x, y\in\mathbb{R}^d$. 
	Thus, using Fubini's theorem, we obtain the following equality
	\begin{align}
		\nabla_\beta^{(\varepsilon)}\left( \int_s^t\mathrm{d}\tau\int_{\mathbb{R}^d}g(s,\cdot,\tau,z)(b(\tau,z),v(\tau,z,t,y))\,\mathrm{d}z\right)(x)= \nonumber\\
		\int_s^t\mathrm{d}\tau\int_{\mathbb{R}^d}\nabla_\beta^{(\varepsilon)} g(s,\cdot,\tau,z)(x)(b(\tau,z),v(\tau,z,t,y))\,\mathrm{d}z.\label{BO_eq16}
	\end{align}
	hold for each $\varepsilon>0$.
	
	So, it is enough to pass to the limit as $\varepsilon\to0+$ in equality \eqref{BO_eq16} to complete the proof of the lemma.
\end{proof}

\begin{proof}[Proof of Theorem \ref{BO_th1}]
	Let us solve equation \eqref{BO_eq11} for all fixed $0\le s<t\le T$, $x, y\in\mathbb{R}^d$, using the method of successive approximations. Namely, consider the sequence of functions $(v_{k}(s,x,t,y))_{0\le s<t\le T, x, y\in\mathbb{R}^{d}}$, $k=0,1,2,\dots$, 
	given by the recurrence relation
	\begin{equation*}
		v_{k+1}(s,x,t,y) =\int_{s}^{t}\mathrm{d} \tau \int_{\mathbb{R}^{d}}v_{0} (s,x,\tau,z)(b(\tau,z), v_{k}(\tau,z,t,y))\, \mathrm{d}z,
	\end{equation*}
	where $v_0$ is defined by \eqref{BO_eq12}.
	
	Using inequality \eqref{BO_eq4} one can obtain the relation ($0\le s<t\le T$, $x, y\in\mathbb{R}^d$)
	\begin{align}\label{BO_eq17}
		|v_{k+1}(s,x,t,y)| \le 2N_{\beta,T}\|b\|_p^T\left(\int_{s}^{t}\mathrm{d} \tau \int_{\mathbb{R}^{d}}\frac{|v_{k}(\tau,z,t,y)|^q}{((\tau-s)^{1/\alpha}+|x-z|)^{(d+\beta)q}}\, \mathrm{d}z+\right.\nonumber\\
		\left.\int_{s}^{t}\mathrm{d} \tau \int_{\mathbb{R}^{d}}\frac{|v_{k}(\tau,z,t,y)|^q(\tau-s)^{(1-\beta/\alpha)q}}{((\tau-s)^{1/\alpha}+|x-z|)^{(d+\alpha-\gamma)q}}\, \mathrm{d}z\right)^{1/q}.
	\end{align}
	Iterating relation \eqref{BO_eq17} by using inequality \eqref{BO_eq2} we come to the estimate
	\begin{align}\label{BO_eq18}
		|v_k(s,x,t,y)|\le R_{k,T}\times \nonumber\\
		\left(\frac{(t-s)^{k\theta}}{((t-s)^{1/\alpha}+|x-y|)^{d+\beta}}+
		\frac{(t-s)^{k\theta+1-\beta/\alpha}}{((t-s)^{1/\alpha}+|x-y|)^{d+\alpha-\gamma}}\right),
	\end{align}
	where $\theta=1-((d+\alpha)/p+\beta)/\alpha$ (note that $\theta >(1-\beta)/\alpha$) and the sequence $\{R_{k,T}:k=0,1,2,\dots\}$ satisfies the relations $R_0=N_{\beta,T}$ and 
	\begin{align}\label{BO_eq19}
		R_{k+1,T}= R_{k,T}8^{1+1/q}N_{\beta,T}\|b\|_p^TC^{1/q}(1+T^{\gamma/\alpha})\times\nonumber\\
		\max\left(B\left((k+1)\theta q,1\right)^{1/q},B\left(1+k\theta q,\theta q\right)^{1/q} \right).
	\end{align}
	for all $k=1,2,\dots$. Here $C$ is the maximum of a finite number of constants taking from inequality \eqref{BO_eq2}.
	Indeed, when $k=0$, inequality \eqref{BO_eq18} coincides with inequality \eqref{BO_eq4}. If \eqref{BO_eq18} is correct for some $k\in\mathbb{N}$ we obtain
	\begin{align}
		|v_{k+1}(s,x,t,y)|\le 8N_{\beta,T}\|b\|_p^TR_{k,T}\times \nonumber\\
		\left[\int_{s}^{t}\mathrm{d} \tau \int_{\mathbb{R}^{d}}\frac{(\tau-s)^{(1-\beta/\alpha)q}}{((\tau-s)^{1/\alpha}+|x-z|)^{(d+\alpha-\gamma)q}}\frac{(t-\tau)^{k\theta q}}{((t-\tau)^{1/\alpha}+|z-y|)^{(d+\beta)q}}\, \mathrm{d}z+\right. \nonumber\\
		\left.\int_{s}^{t}\mathrm{d} \tau \int_{\mathbb{R}^{d}}\frac{(\tau-s)^{(1-\beta/\alpha)q}}{((\tau-s)^{1/\alpha}+|x-z|)^{(d+\alpha-\gamma)q}}\frac{(t-\tau)^{k\theta q+(1-\beta/\alpha)q}}{((t-\tau)^{1/\alpha}+|z-y|)^{(d+\alpha-\gamma)q}}\, \mathrm{d}z+\right. \nonumber\\
		\left.\int_{s}^{t}\mathrm{d} \tau \int_{\mathbb{R}^{d}}\frac{1}{((\tau-s)^{1/\alpha}+|x-z|)^{(d+\beta)q}}\frac{(t-\tau)^{k\theta q+(1-\beta/\alpha)q}}{((t-\tau)^{1/\alpha}+|z-y|)^{(d+\alpha-\gamma)q}}\, \mathrm{d}z+\right.
		\nonumber\\
		\left.\int_{s}^{t}\mathrm{d} \tau \int_{\mathbb{R}^{d}}\frac{1}{((\tau-s)^{1/\alpha}+|x-z|)^{(d+\beta)q}}\frac{(t-\tau)^{k\theta q}}{((t-\tau)^{1/\alpha}+|z-y|)^{(d+\beta)q}}\, \mathrm{d}z
		\right]^{1/q}. 
		\nonumber
	\end{align}
	Using inequality \eqref{BO_eq2}, we can write
	\begin{align}
		|v_{k+1}(s,x,t,y)|\le 8N_{\beta,T}\|b\|_p^TR_{k,T} C^{1/q}\times\nonumber\\
		\left[
		B\left((k+1)\theta q,1+\left(1-\frac{\beta}{\alpha}\right)q\right)
		\frac{(t-s)^{(k+1)\theta q+(1-\beta/\alpha)q}}{((t-s)^{1/\alpha}+|x-y|)^{(d+\alpha-\gamma)q}}+\right.\nonumber\\
		\left.
		B\left(1+k\theta q,\theta q+\frac{\gamma}{\alpha}q\right)
		\frac{(t-s)^{(k+1)\theta q+q\gamma/\alpha}}{((t-s)^{1/\alpha}+|x-y|)^{(d+\beta)q}}+\right.\nonumber\\
		\left.
		B\left((k+1)\theta q+\frac{\gamma}{\alpha}q,1+\left(1-\frac{\beta}{\alpha}\right)q\right)
		\frac{(t-s)^{(k+1)\theta q+(1-\beta/\alpha+\gamma/\alpha)q}}{((t-s)^{1/\alpha}+|x-y|)^{(d+\alpha-\gamma)q}}+\right.\nonumber
	\end{align}
	\begin{align}
		\left.
		B\left(1+k\theta q+\left(1-\frac{\beta}{\alpha}\right),\theta q+\frac{\gamma}{\alpha}q\right)
		\frac{(t-s)^{(k+1)\theta q+(1-\beta/\alpha+\gamma/\alpha)q}}{((t-s)^{1/\alpha}+|x-y|)^{(d+\alpha-\gamma)q}}+\right.\nonumber\\
		\left.
		B\left((k+1)\theta q+\frac{\gamma}{\alpha}q,1\right)
		\frac{(t-s)^{(k+1)\theta q+q\gamma/\alpha}}{((t-s)^{1/\alpha}+|x-y|)^{(d+\beta)q}}+\right.\nonumber\\
		\left.
		B\left(1+k\theta q+\left(1-\frac{\beta}{\alpha}\right)q,\theta q\right)
		\frac{(t-s)^{(k+1)\theta q+(1-\beta/\alpha)q}}{((t-s)^{1/\alpha}+|x-y|)^{(d+\alpha-\gamma)q}}+\right.\nonumber\\
		\left.
		B\left((k+1)\theta q,1\right)
		\frac{(t-s)^{(k+1)\theta q}}{((t-s)^{1/\alpha}+|x-y|)^{(d+\beta)q}}+\right.\nonumber\\
		\left.
		B\left(1+k\theta q,\theta q\right)
		\frac{(t-s)^{(k+1)\theta q}}{((t-s)^{1/\alpha}+|x-y|)^{(d+\beta)q}}
		\right]^{1/q}.  \nonumber
	\end{align}
	Since the function $(B(x,y))_{x>0,y>0}$ decreases with respect to each of its arguments, we have the following:
	\begin{align}
		|v_{k+1}(s,x,t,y)|\le 8N_{\beta,T}\|b\|_p^TR_{k,T} C^{1/q}\times\nonumber\\
		\max\left(B\left((k+1)\theta q,1\right)^{1/q},B\left(1+k\theta q,\theta q\right)^{1/q} \right)8^{1/q}(1+T^{\gamma/\alpha})\times\nonumber\\
		\left(
		\frac{(t-s)^{(k+1)\theta}}{((t-s)^{1/\alpha}+|x-y|)^{d+\beta}}+
		\frac{(t-s)^{(k+1)\theta +1-\beta/\alpha}}{((t-s)^{1/\alpha}+|x-y|)^{d+\alpha-\gamma}} 
		\right)\nonumber
	\end{align}
	that is, \eqref{BO_eq18} holds for $k+1$. Therefore, it is correct for all $k=0,1,2,\dots$ and relation \eqref{BO_eq19} is true.

	Since $\lim_{k\to\infty}B(1+k\theta q,\theta q)=\lim_{k\to\infty}B((k+1)\theta q,1)=0$ the series $\sum_{k=0}^\infty v_k(s,x,t,y)$ converges uniformly with respect to $x, y\in\mathbb{R}^d$ and locally uniformly with respect to $0\le s<t$. Let $(v(s,x,t,y))_{0\le s<t, x, y\in\mathbb{R}^{d}}$ be the sum of this series. The function $v$ is a solution to equation \eqref{BO_eq11} and the following estimate 
	\[
	|v(s,x,t,y)|\le C_T  \left(\frac{1}{((t-s)^{1/\alpha}+|x-y|)^{d+\beta}}+ \frac{(t-s)^{1-\beta/\alpha}}{((t-s)^{1/\alpha}+|x-y|)^{d+\alpha-\gamma}}\right),
	\]
	holds for all $0\le s<t\le T$, $x, y\in\mathbb{R}^d$ and every $T>0$ with some constant $C_T>0$ depended on $T$.
	
	The uniqueness of this solution follows from the fact that the difference ${v^*}$ of every two such solutions satisfies the equation
	\[
	{v^*}(s,x,t,y)=\int_s^t\mathrm{d}\tau \int_{\mathbb{R}^d} v_0(s,x,\tau,z)(b(\tau,z),{v^*}(\tau,z,t,y))\,\mathrm{d}z.
	\]
	Therefore, since ${v^*}$ satisfies estimate \eqref{BO_eq13}, using inequality \eqref{BO_eq2}, one can obtain the following ($0\le s<t\le T$, $x, y\in\mathbb{R}^d$)
	\begin{align}
		|{v^*}(s,x,t,y)|\le  R_{k,T}\times\nonumber\\ \left(\frac{(t-s)^{k\theta}}{((t-s)^{1/\alpha}+|x-y|)^{d+\beta}}+\frac{(t-s)^{k\theta+1-\beta/\alpha}}{((t-s)^{1/\alpha}+|x-y|)^{d+\alpha-\gamma}}\right),\nonumber
	\end{align}
	for all $k\in\mathbb{N}$, $T>0$, where $R_{k,T}$ is defined by \eqref{BO_eq19}. Since $\lim_{k\to\infty}R_{k,T}T^{k\theta}=0$ for all $T>0$, this means that ${v^*}(s,x,t,y)\equiv0$.

	Let us define the function $G$ by equality \eqref{BO_eq10} with the function $v$ just constructed. Using \eqref{BO_eq2}, \eqref{BO_eq3} and H\"{o}lder's inequality we can write the following chain of inequalities
	\begin{align}
		|G(s,x,t,y)|\le N_{0,T}\frac{(t-s)^{1-\gamma/\alpha}}{((t-s)^{1/\alpha}+|x-y|)^{d+\alpha-\gamma}} +\nonumber\\  
		2^{1+1/q}N_{0,T} C_T\|b\|_p^T\times\nonumber\\ 
		\left[\left(\int_s^t\mathrm{d}\tau \int_{\mathbb{R}^d} \frac{(\tau-s)^{(1-\beta/\alpha)q}}{((\tau-s)^{1/\alpha}+|x-z|)^{(d+\alpha-\gamma)q}}  \frac{1}{((t-\tau)^{1/\alpha}+|z-y|)^{(d+\beta)q}}\,\mathrm{d}z\right)^{1/q}+\right.\nonumber\\
		\left.\left(\int_s^t\mathrm{d}\tau \int_{\mathbb{R}^d} \frac{(\tau-s)^{(1-\beta/\alpha)q}}{((\tau-s)^{1/\alpha}+|x-z|)^{(d+\alpha-\gamma)q}}  \frac{(t-\tau)^{(1-\beta/\alpha)q}}{((t-\tau)^{1/\alpha}+|z-y|)^{(d+\beta)q}}\,\mathrm{d}z\right)^{1/q}\right]\le\nonumber	\\
		{C}_T\left[\frac{(t-s)^{\beta/\alpha}}{((t-s)^{1/\alpha}+|x-y|)^{d+\beta}}+\frac{(t-s)^{1-\gamma/\alpha}}{((t-s)^{1/\alpha}+|x-y|)^{d+\alpha-\gamma}}\right]
		,\nonumber
	\end{align}
	where ${C}_T$ (in the last expression) is some positive constant, which might be depended on $T$. 
	
	Let us prove that $\nabla_{\beta}G(s,\cdot,t,y)(x)\equiv v(s,x,t,y)$. 
	Using the Lemma \ref{BO_lem3} statement, we obtain the following equality
	\begin{align}
		\nabla_{\beta}G(s,\cdot,t,y)(x)=\nonumber\\
		v_0(s,x,t,y)+\int_s^t\mathrm{d}\tau \int_{\mathbb{R}^d} v_0(s,x,\tau,z)(b(\tau,z),v(\tau,z,t,y))\,\mathrm{d}z=\nonumber\\
		v(s,x,t,y),\nonumber
	\end{align}
	which is true for all $0\le s<t$, $x, y\in\mathbb{R}^d$.
	
	Consequently the function $G$ is a solution to equation \eqref{BO_eq9} and it satisfies estimate \eqref{BO_eq14}. The theorem is proved.
\end{proof}

\section{Evolution family of operators}
\label{BO_sec4}
Let us define the two-parameter family of operators $\{\mathds{T}_{st}:0\le s<t\}$ defined in the space of continuous bounded functions ${C}_b(\mathbb{R}^d)$ by the equality
\begin{equation}\label{BO_eq20}
	\mathds{T}_{st}\varphi(x)=\int_{\mathbb{R}^d}G(s,x,t,y)\varphi(y)\,\mathrm{d}y,\quad \varphi\in{C}_b(\mathbb{R}^d), \ x\in \mathbb{R}^d.
\end{equation}
Similarly to the proof of Theorem \ref{BO_th1} one can prove the following statement.

\begin{lemma}\label{BO_lem4}
	The function $w(s,x,t,\varphi)=\int_{\mathbb{R}^d}v(s,x,t,y)\varphi(y)\,\mathrm{d}y$, $0\le s<t$, $x\in \mathbb{R}^d$ and $\varphi\in{C}_b(\mathbb{R}^d)$ ($v$ is defined in Theorem \ref{BO_th1}), is a unique (in the class of functions, which satisfy the inequality
	$
	|w(s,x,t,\varphi)|\le C_T(t-s)^{-\beta/\alpha}
	$)
	solution to the equation
	\begin{equation}\label{BO_eq21}
		w(s,x,t,\varphi)=w_0(s,x,t,\varphi)+\int_s^t\mathrm{d}\tau \int_{\mathbb{R}^d} v_0(s,x,\tau,z)(b(\tau,z),w(\tau,z,t,\varphi))\,\mathrm{d}z,
	\end{equation}
	for all $0\le s<t\le T$, $x\in \mathbb{R}^d$ and every $T>0$.
	Here $v_0$ is defined by \eqref{BO_eq12} and $w_0(s,x,t,\varphi)=\int_{\mathbb{R}^d}v_0(s,x,t,y)\varphi(y)\,\mathrm{d}y$.
\end{lemma}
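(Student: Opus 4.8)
The plan is to follow the proof of Theorem \ref{BO_th1} almost verbatim, since $w$ is obtained from $v$ by integration against $\varphi$. There are three things to establish: the bound $|w(s,x,t,\varphi)|\le C_T(t-s)^{-\beta/\alpha}$, the fact that $w$ solves equation \eqref{BO_eq21}, and uniqueness in the stated class.

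For the bound, I would integrate estimate \eqref{BO_eq13} against $\varphi$ and use the elementary radial integral $\int_{\mathbb{R}^d}(a+|z|)^{-(d+r)}\,\mathrm{d}z=C_r\,a^{-r}$ (valid for $r>0$) with $a=(t-s)^{1/\alpha}$. Applying it with $r=\beta$ to the first term of \eqref{BO_eq13} and with $r=\alpha-\gamma$ to the second (both admissible, since $\beta>0$ and $\alpha-\gamma>0$) yields $|w(s,x,t,\varphi)|\le \|\varphi\|_\infty C_T\big((t-s)^{-\beta/\alpha}+(t-s)^{(\gamma-\beta)/\alpha}\big)$; since the ratio of the two terms is $(t-s)^{\gamma/\alpha}\le T^{\gamma/\alpha}$ for $t-s\le T$, the second term is dominated by the first and the required estimate follows. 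This simultaneously shows that $w$ is well defined, the integral defining it being absolutely convergent.

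To see that $w$ solves \eqref{BO_eq21}, I would multiply equation \eqref{BO_eq11} by $\varphi(y)$, integrate in $y$ over $\mathbb{R}^d$, and interchange the $y$-integration with the $(\tau,z)$-integration by Fubini's theorem. The free term of \eqref{BO_eq11} produces $w_0(s,x,t,\varphi)$, and the integral term produces exactly the integral term of \eqref{BO_eq21} once $\int_{\mathbb{R}^d} v(\tau,z,t,y)\varphi(y)\,\mathrm{d}y=w(\tau,z,t,\varphi)$ is pulled inside. \emph{The main obstacle is precisely the justification of this interchange}: one must verify the absolute convergence of the triple integral in $(\tau,z,y)$, which is carried out as in Lemma \ref{BO_lem3} and Theorem \ref{BO_th1}, using bound \eqref{BO_eq4} for $v_0$, bound \eqref{BO_eq13} for $v$, the membership $b\in L_p$ via H\"{o}lder's inequality, and Lemma \ref{BO_lem1} to control the resulting space-time convolution.

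Finally, for uniqueness I would let $w^*$ denote the difference of two solutions in the class, so that $w^*$ satisfies the homogeneous version of \eqref{BO_eq21} and $|w^*(s,x,t,\varphi)|\le C_T(t-s)^{-\beta/\alpha}$. Inserting this bound into the right-hand side, applying H\"{o}lder in $(\tau,z)$ against $b$, integrating $|v_0|^q$ in $z$ by the radial integral above, and performing the remaining $\tau$-integral as a Beta integral, one gains a factor $(t-s)^\theta$ at each step, with $\theta=1-((d+\alpha)/p+\beta)/\alpha$; the hypothesis $p>\frac{d+\alpha}{\alpha-1}$ guarantees $\theta>(1-\beta)/\alpha>0$ and the convergence of every time integral at $\tau=s$. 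Iterating gives $|w^*(s,x,t,\varphi)|\le R_{k,T}(t-s)^{k\theta-\beta/\alpha}$ for all $k$, with $R_{k,T}$ as in \eqref{BO_eq19}; since $R_{k,T}T^{k\theta}\to0$ as $k\to\infty$, this forces $w^*\equiv0$, which is the desired uniqueness.
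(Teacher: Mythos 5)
Your proposal is correct and follows essentially the same route as the paper: the paper's proof likewise obtains \eqref{BO_eq21} by multiplying \eqref{BO_eq11} by $\varphi$ and justifying the Fubini interchange through the absolute convergence of the triple integral in $(y,\tau,z)$, using \eqref{BO_eq4}, \eqref{BO_eq13}, H\"{o}lder's inequality, Lemma \ref{BO_lem1} and the radial integral formula to arrive at the bound $C_T(t-s)^{-\beta/\alpha}$. The only difference is that you spell out the uniqueness iteration explicitly, whereas the paper subsumes it (and the estimate's derivation) under the remark that the statement is proved ``similarly to the proof of Theorem \ref{BO_th1}''.
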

\begin{proof}
	Equation \eqref{BO_eq21} is obtained from equation \eqref{BO_eq11} by multiplying it with the functions $\varphi$ and using the Fubini's theorem. The justification of the usage of the Fubini’s theorem bases on estimates \eqref{BO_eq4}, \eqref{BO_eq13} (see also \eqref{BO_eq12}). Indeed,
	\begin{align}
		\int_{\mathbb{R}^d}\,\mathrm{d}y\int_s^t\mathrm{d}\tau \int_{\mathbb{R}^d} |v_0(s,x,\tau,z)||b(\tau,z)||v(\tau,z,t,y)||\varphi(y)|\,\mathrm{d}z\le  \nonumber \\
		4\|\varphi\|\|b\|_p^TN_{\beta,T}C_T \times \nonumber \\
		\int_{\mathbb{R}^d}\,\mathrm{d}y\left(\int_s^t\mathrm{d}\tau \int_{\mathbb{R}^d} \frac{1}{((\tau-s)^{1/\alpha}+|z-x|)^{(d+\beta)q}}\times
		\right. \nonumber \\
		\left.
		\frac{1}{((t-\tau)^{1/\alpha}+|y-z|)^{(d+\beta)q}}\,\mathrm{d}z +\right. \nonumber \\
		\left.\int_s^t\mathrm{d}\tau \int_{\mathbb{R}^d} \frac{1}{((\tau-s)^{1/\alpha}+|z-x|)^{(d+\beta)q}}\frac{(t-\tau)^{(1-\beta/\alpha)q}}{((t-\tau)^{1/\alpha}+|y-z|)^{(d+\alpha-\gamma)q}}\,\mathrm{d}z +\right. \nonumber \\
		\left.\int_s^t\mathrm{d}\tau \int_{\mathbb{R}^d} \frac{(\tau-s)^{(1-\beta/\alpha)q}}{((\tau-s)^{1/\alpha}+|z-x|)^{(d+\alpha-\gamma)q}}\frac{1}{((t-\tau)^{1/\alpha}+|y-z|)^{q(d+\beta)}}\,\mathrm{d}z +\right. \nonumber \\
		\left.\int_s^t\mathrm{d}\tau \int_{\mathbb{R}^d} \frac{(\tau-s)^{(1-\beta/\alpha)q}}{((\tau-s)^{1/\alpha}+|z-x|)^{(d+\alpha-\gamma)q}}\times
		\right. \nonumber \\
		\left.
		\frac{(t-\tau)^{(1-\beta/\alpha)q}}{((t-\tau)^{1/\alpha}+|y-z|)^{(d+\alpha-\gamma)q}}\,\mathrm{d}z\right)^{1/q}\le   \nonumber \\
		{C}_T \int_{\mathbb{R}^d}\left(\frac{1}{((t-s)^{1/\alpha}+|x-y|)^{d+\beta}}+ \frac{(t-s)^{1-\beta/\alpha}}{((t-s)^{1/\alpha}+|x-y|)^{d+\alpha-\gamma}}\right)\,\mathrm{d}y\le  \nonumber\\
		{C}_T(t-s)^{-\beta/\alpha}  \nonumber
	\end{align}
	for each $T>0$ and every $0\le s<t\le T$, $x\in\mathbb{R}^d$ with some constants ${C}_T>0$. Here we used the well-known formula\footnote{Below we will use this formula without any references.} $\int_{\mathbb{R}^d} (a+|x|)^{-d-\varkappa}\,\mathrm{d}x=a^{-\varkappa}B(d,\varkappa)\frac{2\pi^{d/2}}{\Gamma(d/2)}$, which is valid for all $a>0$ and $\varkappa>0$.
\end{proof}

The next theorem contains the properties of the family of operators \eqref{BO_eq20}.
\begin{thm}
	\label{BO_th2}
	Let the Theorem \ref{BO_th1} assumptions hold. Then the following statements are true:
	\begin{itemize}
		\item the operators $\mathds{T}_{st}$, $0\le s<t$ are linear and bounded on $C_b(\mathbb{R}^d)$;
		\item if $\varphi(x)\equiv1$ then $\mathds{T}_{st}\varphi(x)\equiv1$;
		\item the family of operators $\{\mathds{T}_{st}:0\le s<t\}$ has an evolution property, that is, $\mathds{T}_{s\tau}\mathds{T}_{\tau t}=\mathds{T}_{st}$ for all $0\le s<\tau<t$;
		\item $\wlim_{s\uparrow t}\mathds{T}_{st}=I$, where $I$ is the identical operator, i.e., $\lim_{s\uparrow t}\mathds{T}_{st}\varphi(x)=\varphi(x)$, $x\in\mathbb{R}^d$ for all $\varphi\in C_b(\mathbb{R}^d)$.
	\end{itemize}
	
\end{thm}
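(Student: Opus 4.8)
The plan is to dispatch the four assertions in turn, using throughout the bound \eqref{BO_eq14} for $G$, the bound \eqref{BO_eq13} for $v$, Lemma \ref{BO_lem4}, Lemma \ref{BO_lem1}, and the standard properties of the unperturbed density $g$ (conservativeness, the Chapman--Kolmogorov identity, and the approximate-identity property). Linearity of $\mathds{T}_{st}$ is immediate from \eqref{BO_eq20}. For boundedness I would integrate \eqref{BO_eq14} in $y$ by means of the formula $\int_{\mathbb{R}^d}(a+|z|)^{-d-\varkappa}\,\mathrm{d}z=Ca^{-\varkappa}$ recalled after Lemma \ref{BO_lem4}: with $a=(t-s)^{1/\alpha}$ the powers of $t-s$ cancel exactly in each of the two terms, so $\int_{\mathbb{R}^d}|G(s,x,t,y)|\,\mathrm{d}y\le C_T$ uniformly in $x$ and in $0\le s<t\le T$, whence $|\mathds{T}_{st}\varphi(x)|\le C_T\|\varphi\|$. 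For the conservativeness statement I would integrate the defining equation \eqref{BO_eq9} in $y$. Since $g$ is a transition probability density, $\int_{\mathbb{R}^d}g(s,x,t,y)\,\mathrm{d}y=1$, and the remaining term contains $\int_{\mathbb{R}^d}v(\tau,z,t,y)\,\mathrm{d}y=w(\tau,z,t,\varphi)$ with $\varphi\equiv1$. By Lemma \ref{BO_lem4} this function solves \eqref{BO_eq21}, whose free term $w_0(s,x,t,\varphi)=\int_{\mathbb{R}^d}\nabla_\beta g(s,\cdot,t,y)(x)\,\mathrm{d}y=\nabla_\beta\bigl(\int_{\mathbb{R}^d}g(s,\cdot,t,y)\,\mathrm{d}y\bigr)(x)$ vanishes, because the inner integral is the constant $1$ and $\nabla_\beta$ annihilates constants. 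The homogeneous equation has only the zero solution in the uniqueness class of Lemma \ref{BO_lem4}, so this $w\equiv0$ and therefore $\int_{\mathbb{R}^d}G(s,x,t,y)\,\mathrm{d}y\equiv1$, i.e.\ $\mathds{T}_{st}\varphi\equiv1$ when $\varphi\equiv1$.

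The evolution property is the heart of the matter and the main obstacle. Fix $t$ and $\varphi$, put $u(s,x)=\mathds{T}_{st}\varphi(x)$ and $w(s,x)=w(s,x,t,\varphi)$; a Lemma \ref{BO_lem3}-type interchange of $\nabla_\beta$ with the $y$-integration gives $w(s,\cdot)(x)=\nabla_\beta u(s,\cdot)(x)$, and \eqref{BO_eq10} together with Lemma \ref{BO_lem4} shows that $u$ solves
\[
u(s,x)=\int_{\mathbb{R}^d}g(s,x,t,y)\varphi(y)\,\mathrm{d}y+\int_s^t\mathrm{d}r\int_{\mathbb{R}^d}g(s,x,r,z)(b(r,z),\nabla_\beta u(r,\cdot)(z))\,\mathrm{d}z.
\]
Now fix $\tau\in(s,t)$ and set $\psi=\mathds{T}_{\tau t}\varphi=u(\tau,\cdot)$. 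Splitting the $r$-integral at $\tau$, substituting the Chapman--Kolmogorov identity $\int_{\mathbb{R}^d}g(s,x,\tau,\xi)g(\tau,\xi,r,z)\,\mathrm{d}\xi=g(s,x,r,z)$ (and its $r=t$ analogue) into the terms with $r\ge\tau$, and recognizing the resulting inner expression as the displayed equation evaluated at $s=\tau$, i.e.\ as $u(\tau,\xi)=\psi(\xi)$, I would obtain that $u$ satisfies, for $s<\tau$, the same integral equation as $\mathds{T}_{s\tau}\psi$ with terminal data $\psi$ at time $\tau$. Since the difference of two solutions has vanishing pseudo-gradient by the uniqueness of Lemma \ref{BO_lem4} and hence vanishes itself, I conclude $u(s,\cdot)=\mathds{T}_{s\tau}\psi=\mathds{T}_{s\tau}\mathds{T}_{\tau t}\varphi$, which is the claim. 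The delicate points are the several Fubini interchanges (all controlled by \eqref{BO_eq3}, \eqref{BO_eq13}, \eqref{BO_eq14} and Lemma \ref{BO_lem1}) and transferring the uniqueness argument of Lemma \ref{BO_lem4} to the present equation.

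Finally, for $\wlim_{s\uparrow t}\mathds{T}_{st}=I$ I would write $\mathds{T}_{st}\varphi(x)=\int_{\mathbb{R}^d}g(s,x,t,y)\varphi(y)\,\mathrm{d}y+\int_s^t\mathrm{d}r\int_{\mathbb{R}^d}g(s,x,r,z)(b(r,z),w(r,z,t,\varphi))\,\mathrm{d}z$. The first term tends to $\varphi(x)$ because $g(s,x,t,\cdot)$ is an approximate identity as $s\uparrow t$ (a property of the transition density, visible from the Fourier representation of $g_0$ and the lower-order bound on $h$ recalled in Lemma \ref{BO_lem2}). For the second term I would combine the bound $|w(r,z,t,\varphi)|\le C_T\|\varphi\|(t-r)^{-\beta/\alpha}$ from Lemma \ref{BO_lem4}, the estimate \eqref{BO_eq3} on $g$, H\"older's inequality in $(r,z)$ and a Beta-function time integral to obtain a bound of order $(t-s)^{1-\beta/\alpha-(d+\alpha)/(\alpha p)}$; the hypothesis $p>(d+\alpha)/(\alpha-1)$ together with $\beta<1$ makes this exponent strictly positive, so the second term tends to $0$ and the asserted weak convergence follows.
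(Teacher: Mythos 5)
Your proposal is correct and follows essentially the same route as the paper: boundedness via integrating \eqref{BO_eq14}, conservativeness via $w_0(s,x,t,1)\equiv0$ and the uniqueness in Lemma \ref{BO_lem4}, the evolution property via splitting the time integral, the Chapman--Kolmogorov identity and again the uniqueness of solutions to \eqref{BO_eq21}, and the limit $s\uparrow t$ via the approximate-identity property of $g$ plus a H\"older/Beta-function bound whose exponent $1-\beta/\alpha-(d+\alpha)/(\alpha p)$ agrees with the paper's. No gaps worth flagging.
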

\begin{proof}
	The linearity of operator $\mathds{T}_{st}$ is evident. Let us prove its boundedness. If $\varphi\in C_b(\mathbb{R}^d)$ then using inequality \eqref{BO_eq14} we can write ($\|\varphi\|=\max_{x\in\mathbb{R}^d}|\varphi(x)|$)
	\begin{align}
		|\mathds{T}_{st}\varphi(x)|\le\|\varphi\|\int_{\mathbb{R}^d}|G(s,x,t,y)|\,\mathrm{d}y\le\nonumber\\ C_T\|\varphi\|\int_{\mathbb{R}^d}\left(\frac{(t-s)^{\beta/\alpha}}{((t-s)^{1/\alpha}+|x-y|)^{d+\beta}}+\frac{(t-s)^{1-\gamma/\alpha}}{((t-s)^{1/\alpha}+|x-y|)^{d+\alpha-\gamma}}\right)\, \mathrm{d}y\le\nonumber\\
		{C}_T\|\varphi\|\nonumber
	\end{align}
	for all $0\le s<t\le T$ and each $T>0$. Therefore the operators $\mathds{T}_{st}$ are bounded.
	
	Next, if $\varphi(x)\equiv1$ then	
	\begin{align}
		\mathds{T}_{st}\varphi(x)=\int_{\mathbb{R}^d}g(s,x,t,y)\,\mathrm{d}y+\nonumber\\
		\int_s^t\mathrm{d}\tau \int_{\mathbb{R}^d} g(s,x,\tau,z)(b(\tau,z),\int_{\mathbb{R}^d}v(\tau,z,t,y)\,\mathrm{d}y)\,\mathrm{d}z.\nonumber
	\end{align}
	The function $w(s,x,t,1)=\int_{\mathbb{R}^d}v(\tau,z,t,y)\,\mathrm{d}y$, $0\le s<t$, $x\in\mathbb{R}^d$ is a solution to equation \eqref{BO_eq21} with $w_0(s,x,t,1)\equiv0$.
	So, $w(s,x,t,1)\equiv0$ and $\mathds{T}_{st}\varphi(x)=\int_{\mathbb{R}^d}g(s,x,t,y)\,\mathrm{d}y\equiv1$.
	
	Although the evolution property can be proved in a standard way (see, for example \cite{Osypchuk2,Osypchuk1,Portenko2,Portenko3,Portenko5}), we will provide it here. For this, let us choose arbitrary $0\le s<u<t$, $\varphi\in C_b(\mathbb{R}^d)$, $x\in\mathbb{R}^d$ and consider 
	\begin{align}
		\mathds{T}_{st}\varphi(x)=\mathds{T}_{st}^0\varphi(x)+
		\int_s^t\mathrm{d}\tau \int_{\mathbb{R}^d} g(s,x,\tau,z)(b(\tau,z),w(\tau,z,t,\varphi))\,\mathrm{d}z= \nonumber \\
		\mathds{T}_{su}^0(\mathds{T}_{ut}^0\varphi)(x)+
		\int_s^u\mathrm{d}\tau \int_{\mathbb{R}^d} g(s,x,\tau,z)(b(\tau,z),w(\tau,z,t,\varphi))\,\mathrm{d}z+ \nonumber \\
		\int_{\mathbb{R}^d}g(s,x,u,y)\, \mathrm{d}y \int_u^t\mathrm{d}\tau \int_{\mathbb{R}^d} g(u,y,\tau,z)(b(\tau,z),w(\tau,z,t,\varphi))\,\mathrm{d}z= \nonumber \\
		\mathds{T}_{su}^0(\mathds{T}_{ut}\varphi)(x)+
		\int_s^u\mathrm{d}\tau \int_{\mathbb{R}^d} g(s,x,\tau,z)(b(\tau,z),w(\tau,z,t,\varphi))\,\mathrm{d}z, \nonumber
	\end{align}
	where
	\begin{equation}\label{BO_eq22}
		\mathds{T}_{st}^0\varphi(x)=\int_{\mathbb{R}^d}g(s,x,t,y)\varphi(y)\,\mathrm{d}y.
	\end{equation}
	Using \eqref{BO_eq21}, one can obtain (by changing of integration order)
	\[
	w(s,x,t,\varphi)=\int_{\mathbb{R}^d}\varphi(y)\,\mathrm{d}y \int_{\mathbb{R}^d}\nabla_{\beta}g(s,\cdot,u,z)(x)g(u,z,t,y)\,\mathrm{d}z+
	\]
	\begin{align}\int_s^u\mathrm{d}\tau \int_{\mathbb{R}^d} v_0(s,x,\tau,z)(b(\tau,z),w(\tau,z,t,\varphi))\,\mathrm{d}z+ \nonumber \\
		\int_u^t\mathrm{d}\tau \int_{\mathbb{R}^d} \int_{\mathbb{R}^d} v_0(s,x,u,y)g(u,y,\tau,z)\,\mathrm{d}y(b(\tau,z),w(\tau,z,t,\varphi))\,\mathrm{d}z= \nonumber \\
		w_0(s,x,u,\mathds{T}_{ut}\varphi)+ \int_s^u\mathrm{d}\tau \int_{\mathbb{R}^d} v_0(s,x,\tau,z)(b(\tau,z),w(\tau,z,t,\varphi))\,\mathrm{d}z. \nonumber
	\end{align}
	Here we used the well-known relation (the Chapman-Kolmogorov equation)
	\[
	g(s,x,t,y)=\int_{\mathbb{R}^d}g(s,x,u,z)g(u,z,t,y)\,\mathrm{d}z,
	\]
	which is true for all $0\le s<u<t$, $x, y\in\mathbb{R}^d$.
	
	Since equation \eqref{BO_eq21} has a unique solution, we state that 
	\[
	w(s,x,t,\varphi)=w(s,x,u,\mathds{T}_{ut}\varphi)
	\]
	for all $\varphi\in C_b(\mathbb{R}^d)$, $x\in\mathbb{R}^d$, $0\le s<u<t$. Therefore $\mathds{T}_{st}\varphi(x)=\mathds{T}_{su}(\mathds{T}_{ut}\varphi)(x)$ and the evolution property is proved.
	
	The last statement of this theorem follows from the following two facts: first, using \eqref{BO_eq22} we have the equality $\wlim_{s\uparrow t}\mathds{T}_{st}^0=I$, and second, (note that $1\le q<\frac{d+\alpha}{d+1}$)
	\begin{align}
		\left|\int_s^t\mathrm{d}\tau \int_{\mathbb{R}^d} g(s,x,\tau,z)(b(\tau,z),w(\tau,z,t,\varphi))\,\mathrm{d}z\right|\le \nonumber \\
		\|b\|_p^TN_{0,T}C_T\left(\int_s^t\mathrm{d}\tau \int_{\mathbb{R}^d} \frac{(\tau-s)^{(1-\gamma/\alpha)q}}{((\tau-s)^{1/\alpha}+|x-z|)^{(d+\alpha-\gamma)q}}(t-\tau)^{-q\beta/\alpha}\,\mathrm{d}z\right)^{1/q}\le \nonumber \\
		{C}_T\left(\int_s^t  (\tau-s)^{-(q-1)d/\alpha}(t-\tau)^{-q\beta/\alpha}\,\mathrm{d}\tau\right)^{1/q}= \nonumber \\
		{C}_T(t-s)^{1-q\beta/\alpha-(q-1)d/\alpha}\to0,\mbox{ as }s\uparrow t. \nonumber 
	\end{align}
	This completes the proof of the theorem.
\end{proof}
\begin{remark}
	We cannot state that the operators $\mathds{T}_{st}$ keep the cone of non-negative functions. We have no proof that the function $\mathds{T}_{st}\varphi(x)$ can have negative values if $\varphi(x)\ge0$, $x\in\mathbb{R}^d$. But the example of $\alpha$-stable process and $b(t,x)\equiv b\in\mathbb{R}^d$ confirms this fact. 
	Exactly analogous to how it is done in \cite{BigOs} for the case $\beta=\alpha-1$, we can obtain the following equality
	\[
	G(s,x,t,y)=\frac{1}{(2\pi)^d}\int_{\mathbb{R}^d}\exp\{i(x-y-(t-s)b|\lambda|^{\beta-1},\lambda)-c(t-s)|\lambda|^\alpha\}\mathrm{d}\lambda,
	\]
	in our case. The function $G$ here has negative values as the Fourier transform of a non-positive definite function.
	
	Thus, the evolution operators family $\{\mathds{T}_{st}:0\le s<t\}$ does not define any of the Markov processes but only a pseudo-process possessing a ``Markov property''.
\end{remark}

\section{Cauchy problem}\label{BO_sec5}
In this section we fix some $T>0$ and prove that the function $u(s,x,t)=\mathds{T}_{st}\varphi(x)$, $0\le s< t\le T$, $x\in\mathbb{R}^d$ is a solution (in some sense) to the following Cauchy problem:
\begin{align}
	\frac{\partial}{\partial s}u(s,x,t)+L(s,x)u(s,\cdot,t)(x)=0,\quad 0\le s<t,\ x\in\mathbb{R}^d\label{BO_eq23} \\
	\lim_{s\uparrow t}u(s,x,t)=\varphi(x),\quad x\in\mathbb{R}^d,\label{BO_eq24}
\end{align}
for every $t\in(0,T]$, where $L(s,x)=A(s,x)+(b(s,x),\nabla_{\beta})$ and $\varphi\in C_b(\mathbb{R}^d)$.

Equality \eqref{BO_eq24} is proved in Theorem \ref{BO_th2} (see the last statement). Equality \eqref{BO_eq23} will be proved in some generalized sense. Namely, let a sequence $\{b_n:n\in\mathbb{N}\}\subset L_p([0,T]\times\mathbb{R}^d)$ be such that $L_p\mbox{-}\lim_{n\to\infty}b_n=b$ and $u_n(s,x,t)=\mathds{T}_{st}^{(n)}\varphi(x)$ satisfied the Cauchy problem \eqref{BO_eq23}, \eqref{BO_eq24} with the function $b_n$ instead of $b$. The operators $\mathds{T}_{st}^{(n)}$ are constructed using Theorems \ref{BO_th1}, \ref{BO_th2} and functions $b_n$ instead of $b$.  If the limit $u(s,x,t)=\lim_{n\to\infty}u_n(s,x,t)$, $0\le s<t\le T$, $x\in\mathbb{R}^d$ exists, we will call the function $u$ by the generalized solution to the Cauchy problem \eqref{BO_eq23}, \eqref{BO_eq24}. 
The following auxiliary statement will be useful in constructing this generalized solution.

\begin{lemma}
	\label{BO_lem5}
	Let functions $(\tilde{b}(s,x))_{s\ge0,x\in\mathbb{R}^d}$ and $(\hat{b}(s,x))_{s\ge0,x\in\mathbb{R}^d}$ satisfy the assumptions of Theorem \ref{BO_th1} with the same $p$. Then the corresponding functions $\tilde{G}$ and $\hat{G}$ satisfy the following inequality
	\begin{align}
		\label{BO_eq25}
		|\tilde{G}(s,x,t,y)-\hat{G}(s,x,t,y)|\le C_T\|\tilde{b}-\hat{b}\|_p^T(1+\|\tilde{b}\|_p^T+\|\hat{b}\|_p^T)\times\nonumber\\
		\frac{1}{((t-s)^{1/\alpha}+|x-y|)^{d+\beta-\gamma}},
	\end{align}
	for all $0\le s<t\le T$, $x,y\in\mathbb{R}^d$ and each $T>0$ with some positive constant $C_T$.
\end{lemma}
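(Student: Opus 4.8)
The plan is to exploit that neither the unperturbed density $g$ nor the field $v_0=\nabla_\beta g$ depends on the drift, so that the entire $b$-dependence sits in the factors $v$ and in the explicit $b$ of representation \eqref{BO_eq10}. Writing \eqref{BO_eq10} for $\tilde b$ and for $\hat b$ and subtracting, the common term $g$ cancels and
\begin{align*}
	\tilde G-\hat G=\int_s^t\mathrm{d}\tau\int_{\mathbb{R}^d}g(s,x,\tau,z)\Big[(\tilde b-\hat b,\tilde v)+(\hat b,\tilde v-\hat v)\Big](\tau,z,t,y)\,\mathrm{d}z,
\end{align*}
where I have used the splitting $(\tilde b,\tilde v)-(\hat b,\hat v)=(\tilde b-\hat b,\tilde v)+(\hat b,\tilde v-\hat v)$. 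Thus everything reduces to two ingredients: the ``direct'' term carrying the factor $\tilde b-\hat b$, and a bound on the difference $V:=\tilde v-\hat v$.

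To control $V$, I subtract the two copies of \eqref{BO_eq11}; since $v_0$ is common, $V$ solves the inhomogeneous equation
\begin{align*}
	V(s,x,t,y)=\Psi(s,x,t,y)+\int_s^t\mathrm{d}\tau\int_{\mathbb{R}^d}v_0(s,x,\tau,z)(\hat b(\tau,z),V(\tau,z,t,y))\,\mathrm{d}z,
\end{align*}
with source $\Psi=\int_s^t\mathrm{d}\tau\int_{\mathbb{R}^d}v_0(s,x,\tau,z)(\tilde b-\hat b,\tilde v)(\tau,z,t,y)\,\mathrm{d}z$. This is precisely the equation solved in Theorem \ref{BO_th1}, now driven by $\hat b$ and with inhomogeneity $\Psi$ in place of $v_0$. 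I first estimate $\Psi$: using \eqref{BO_eq4} for $v_0$, \eqref{BO_eq13} for $\tilde v$, H\"older's inequality to extract $\|\tilde b-\hat b\|_p^T$, and Lemma \ref{BO_lem1}, one obtains $|\Psi|\le C_T\|\tilde b-\hat b\|_p^T$ times a kernel of the same type as \eqref{BO_eq13} but carrying an extra power $(t-s)^{\theta}$ (the positive exponent $\theta$ of Theorem \ref{BO_th1}, the gain from one convolution against an $L_p$ factor). Running the successive-approximation scheme of Theorem \ref{BO_th1} for the integral operator $\mathcal{K}_{\hat b}$ with kernel $v_0(s,x,\tau,z)\hat b(\tau,z)$ — whose iterates again acquire factorially small Beta-function coefficients — yields $V=\sum_{n\ge0}\mathcal{K}_{\hat b}^{\,n}\Psi$ with $|V|\le C_T\|\tilde b-\hat b\|_p^T$ times the same kernel, the constant now depending on $\|\hat b\|_p^T$.

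It remains to insert the bounds on $(\tilde b-\hat b,\tilde v)$ and on $(\hat b,V)$ into the expression for $\tilde G-\hat G$ and to perform the outer convolution against $g$. Using \eqref{BO_eq3}, H\"older's inequality (extracting $\|\tilde b-\hat b\|_p^T$ in the first term and $\|\hat b\|_p^T$ in the second) and Lemma \ref{BO_lem1}, every resulting term takes the form $(t-s)^{e}\big((t-s)^{1/\alpha}+|x-y|\big)^{-\sigma}$ with $\sigma\in\{d+\beta,\,d+\alpha-\gamma\}$ and nonnegative $e$. The decisive point is that each such $e$ satisfies $\sigma-e\alpha\le d+\beta-\gamma$: splitting $(t-s)^{e}=(t-s)^{(\sigma-(d+\beta-\gamma))/\alpha}\,(t-s)^{e-(\sigma-(d+\beta-\gamma))/\alpha}$, bounding the first factor by $\big((t-s)^{1/\alpha}+|x-y|\big)^{\sigma-(d+\beta-\gamma)}$ and the second (nonnegative) power by $T^{\,e-(\sigma-(d+\beta-\gamma))/\alpha}$, one downgrades every term to $C_T\big((t-s)^{1/\alpha}+|x-y|\big)^{-(d+\beta-\gamma)}$. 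Collecting the prefactors — one factor $\|\tilde b-\hat b\|_p^T$ from the source, the remaining drift dependence bounded as in Theorem \ref{BO_th1} — then assembles \eqref{BO_eq25}.

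The main obstacle is the exponent bookkeeping in this last step. I must check that all arguments of the Beta functions in Lemma \ref{BO_lem1} stay positive (so the iterated space--time integrals converge), that the hypothesis $p>\frac{d+\alpha}{\alpha-1}$ — equivalently $1\le q<\frac{d+\alpha}{d+1}$ — keeps the powers $(d+\beta)q$ and $(d+\alpha-\gamma)q$ admissible after raising to the $q$-th power inside H\"older, and, most delicately, that the time exponent $e$ accompanying each singular factor really does exceed the threshold $(\sigma-(d+\beta-\gamma))/\alpha$ needed to reduce $\sigma$ down to $d+\beta-\gamma$ with a nonnegative remainder. This trade of available time decay for a milder spatial singularity — rather than any genuine smoothing — is what produces the exponent $d+\beta-\gamma$ in \eqref{BO_eq25}, and it is the one place where both $\gamma\in(0,1)$ and the lower bound on $p$ are essential.
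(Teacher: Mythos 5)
Your proposal is correct and follows essentially the same route as the paper: the paper likewise writes $\hat G-\tilde G$ as a convolution of $g$ against the difference $W=(\hat b,\hat v)-(\tilde b,\tilde v)$, derives for it a Volterra equation whose inhomogeneity carries the factor $\hat b-\tilde b$, solves it by the successive approximations of Theorem \ref{BO_th1}, and then trades the surplus time decay for the reduction of the spatial exponent to $d+\beta-\gamma$ in the final convolution. The only (immaterial) difference is that the paper iterates the scalar $W$ and keeps the pointwise factor $|\hat b(s,x)-\tilde b(s,x)|$ until the last H\"older step, whereas you iterate the vector $V=\tilde v-\hat v$ and extract $\|\tilde b-\hat b\|_p^T$ already when bounding the source $\Psi$.
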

\begin{proof}
	Using \eqref{BO_eq10}, we can obtain the following
	\begin{equation}\label{BO_eq26}
		\hat{G}(s,x,t,y)-\tilde{G}(s,x,t,y)=\int_s^t\mathrm{d}\tau \int_{\mathbb{R}^d} g(s,x,\tau,z)W(\tau,z,t,y))\,\mathrm{d}z
	\end{equation}
	for all $0\le s<t$, $x,y\in\mathbb{R}^d$, where $W(s,x,t,y)=(\hat{b}(s,x),\hat{v}(s,x,t,y))- (\tilde{b}(s,x),\tilde{v}(s,x,t,y))$, in which $\hat{v}$ and $\tilde{v}$ are solutions to equations obtained from \eqref{BO_eq11} replacing the function $b$ by the functions $\hat{b}$ and $\tilde{b}$, respectively. Relation \eqref{BO_eq11} leads us to the equation
	\begin{align}
		W(s,x,t,y)=W_0(s,x,t,y)+\int_s^t\mathrm{d}\tau \int_{\mathbb{R}^d}(v_0(s,x,\tau,z),\hat{b}(s,x))W(\tau,z,t,y))\,\mathrm{d}z+ \nonumber\\
		\int_{\mathbb{R}^d}W_0(s,x,\tau,z)(\tilde{b}(\tau,z),\tilde{v}(\tau,z,t,y))\,\mathrm{d}z,\nonumber
	\end{align}
	where $W_0(s,x,t,y)=(\hat{b}(s,x)-\tilde{b}(s,x),v_0(s,x,t,y))$. Considering \eqref{BO_eq4}, we obtain the following inequality
	\begin{align}
		|W_0(s,x,t,y)\le|\hat{b}(s,x)-\tilde{b}(s,x)|N_{\beta,T}\left(\frac{1}{((t-s)^{1/\alpha}+|y-x|)^{d+\beta}}\right.+\nonumber\\
		\left.\frac{(t-s)^{1-\beta/\alpha}}{((t-s)^{1/\alpha}+|y-x|)^{d+\alpha-\gamma}}\right),\nonumber
	\end{align}	
	valid for all $0\le s<t\le T$, $x,y\in\mathbb{R}^d$	and each $T>0$. Moreover
	\begin{align}
		\left|\int_s^t\mathrm{d}\tau\int_{\mathbb{R}^d}W_0(s,x,\tau,z)(\tilde{b}(\tau,z),\tilde{v}(\tau,z,t,y))\,\mathrm{d}z\right|\le\nonumber \\
		N_{\beta,T} |\hat{b}(s,x)-\tilde{b}(s,x)| C_T \int_s^t\mathrm{d}\tau \int_{\mathbb{R}^d}|\tilde{b}(\tau,z)|\times \nonumber\\
		\left(\frac{1}{((\tau-s)^{1/\alpha}+|z-x|)^{d+\beta}}+
		\frac{(\tau-s)^{1-\beta/\alpha}}{((\tau-s)^{1/\alpha}+|z-x|)^{d+\alpha-\gamma}}\right)\times\nonumber\\
		\left(\frac{1}{((t-\tau)^{1/\alpha}+|y-z|)^{d+\beta}}+
		\frac{(t-\tau)^{1-\beta/\alpha}}{((t-\tau)^{1/\alpha}+|y-z|)^{d+\alpha-\gamma}}\right)\,\mathrm{d}z=\nonumber\\
		C_T  |\hat{b}(s,x)-\tilde{b}(s,x)| \sum_{k=1}^{4}I_k,\nonumber
	\end{align}
	where
	\[
	I_1=\int_s^t\mathrm{d}\tau \int_{\mathbb{R}^d}|\tilde{b}(\tau,z)|
	\frac{1}{((\tau-s)^{1/\alpha}+|z-x|)^{d+\beta}}
	\frac{1}{((t-\tau)^{1/\alpha}+|y-z|)^{d+\beta}}\,\mathrm{d}z,
	\]	
	\begin{align}
		I_2=\int_s^t\mathrm{d}\tau \int_{\mathbb{R}^d}|\tilde{b}(\tau,z)|
		\frac{1}{((\tau-s)^{1/\alpha}+|z-x|)^{d+\beta}}\times\nonumber\\
		\frac{(t-\tau)^{1-\beta/\alpha}}{((t-\tau)^{1/\alpha}+|y-z|)^{d+\alpha-\gamma}}\,\mathrm{d}z,\nonumber
	\end{align}	
	\begin{align}
		I_3=\int_s^t\mathrm{d}\tau \int_{\mathbb{R}^d}|\tilde{b}(\tau,z)|
		\frac{(\tau-s)^{1-\beta/\alpha}}{((\tau-s)^{1/\alpha}+|z-x|)^{d+\alpha-\gamma}}\times\nonumber\\
		\frac{1}{((t-\tau)^{1/\alpha}+|y-z|)^{d+\beta}}\,\mathrm{d}z,\nonumber
	\end{align}	
	\begin{align}
		I_4=\int_s^t\mathrm{d}\tau \int_{\mathbb{R}^d}|\tilde{b}(\tau,z)|
		\frac{(\tau-s)^{1-\beta/\alpha}}{((\tau-s)^{1/\alpha}+|z-x|)^{d+\alpha-\gamma}}\times\nonumber\\
		\frac{(t-\tau)^{1-\beta/\alpha}}{((t-\tau)^{1/\alpha}+|y-z|)^{d+\alpha-\gamma}}\,\mathrm{d}z.\nonumber
	\end{align}	
	Since $\tilde{b}\in L_p([0,T]\times\mathbb{R}^d)$ for each $T>0$, using inequality \eqref{BO_eq2} we obtain the following estimates
	\[
	I_1\le\|\tilde{b}\|_p^T (2CB(1,\theta q))^{1/q}\frac{(t-s)^{\theta}}{((t-s)^{1/\alpha}+|y-x|)^{d+\beta}},
	\]
	\begin{align}
		I_2\le\|\tilde{b}\|_p^T \left(2C\max\left(B\left(1,\theta q+q\frac{\gamma}{\alpha}\right), B\left(1+\left(1-\frac{\beta}{\alpha}\right)q,\theta q\right) \right)\right)^{1/q}\times\nonumber\\ 
		\left( \frac{(t-s)^{\theta+\gamma/\alpha}}{((t-s)^{1/\alpha}+|y-x|)^{d+\beta}}+ \frac{(t-s)^{\theta+1-\beta/\alpha}}{((t-s)^{1/\alpha}+|y-x|)^{d+\alpha-\gamma}}\right),\nonumber
	\end{align}
	\begin{align}
		I_3\le\|\tilde{b}\|_p^T \left(2C\max\left(B\left(1,\theta q+q\frac{\gamma}{\alpha}\right), B\left(1+\left(1-\frac{\beta}{\alpha}\right)q,\theta q\right) \right)\right)^{1/q}\times\nonumber\\ 
		\left( \frac{(t-s)^{\theta+\gamma/\alpha}}{((t-s)^{1/\alpha}+|y-x|)^{d+\beta}}+ \frac{(t-s)^{\theta+1-\beta/\alpha}}{((t-s)^{1/\alpha}+|y-x|)^{d+\alpha-\gamma}}\right),\nonumber
	\end{align}
	\[
	I_4\le\|\tilde{b}\|_p^T \left(2CB\left(1+\left(1-\frac{\beta}{\alpha}\right)q,\theta q+q\frac{\gamma}{\alpha}\right)\right)^{1/q}
	\frac{(t-s)^{\theta+1-\beta/\alpha+\gamma/\alpha}}{((t-s)^{1/\alpha}+|y-x|)^{d+\alpha-\gamma}},\nonumber
	\]
	where, as it was above, $\theta=1-((d+\alpha)/p+\beta)/\alpha$, $q=p/(p-1)$ and $C$ is maximum of  positive constants derived from inequality \eqref{BO_eq2} in considered four cases. Therefore
	\begin{align}
		\left|\int_s^t\mathrm{d}\tau\int_{\mathbb{R}^d}W_0(s,x,\tau,z)(\tilde{b}(\tau,z),\tilde{v}(\tau,z,t,y))\,\mathrm{d}z\right|\le C_T\|\tilde{b}\|_p^T |\hat{b}(s,x)-\tilde{b}(s,x)|\times \nonumber \\
		\left(\frac{1}{((t-s)^{1/\alpha}+|y-x|)^{d+\beta}}+
		\frac{(t-s)^{1-\beta/\alpha}}{((t-s)^{1/\alpha}+|y-x|)^{d+\alpha-\gamma}}\right).\nonumber	
	\end{align}
	
	Let us denote
	\[
	W_0^*(s,x,t,y)=W_0(s,x,t,y)+\int_s^t\mathrm{d}\tau\int_{\mathbb{R}^d}W_0(s,x,\tau,z)(\tilde{b}(\tau,z),\tilde{v}(\tau,z,t,y))\,\mathrm{d}z.
	\]
	Then we can write down the following
	\begin{equation}
		\label{BO_eq27}
		W(s,x,t,y)=W_0^*(s,x,t,y)+\int_s^t\mathrm{d}\tau \int_{\mathbb{R}^d}(v_0(s,x,\tau,z),\hat{b}(s,x))W(\tau,z,t,y))\,\mathrm{d}z.
	\end{equation}
	This equation can be solved by the method of successive approximations, i. e., a solution of it can be found in the form $W(s,x,t,y)=\sum_{k=0}^\infty W_k^*(s,x,t,y)$. The terms of this series satisfy the following relation
	\[
	W_k^*(s,x,t,y)=\int_s^t\mathrm{d}\tau \int_{\mathbb{R}^d}(v_0(s,x,\tau,z),\hat{b}(s,x))W_{k-1}^*(\tau,z,t,y))\,\mathrm{d}z \quad k=1,2,\dots.
	\]
	To justification this, note that
	\begin{align}
		|W_0^*(s,x,t,y)|\le C_T(1+\|\tilde{b}\|_p^T )|\hat{b}(s,x)-\tilde{b}(s,x)|\times\nonumber\\
		\left(\frac{1}{((t-s)^{1/\alpha}+|y-x|)^{d+\beta}}+
		\frac{(t-s)^{1-\beta/\alpha}}{((t-s)^{1/\alpha}+|y-x|)^{d+\alpha-\gamma}}\right).\nonumber
	\end{align}
	Moreover, by the method mathematical induction one can prove the following estimate
	\begin{align}
		|W_k^*(s,x,t,y)|\le R_k|\hat{b}(s,x)|\|\hat{b}-\tilde{b}\|_p^T\times\nonumber\\ \left(\frac{(t-s)^{k\theta}}{((t-s)^{1/\alpha}+|y-x|)^{d+\beta}}+
		\frac{(t-s)^{k\theta+1-\beta/\alpha}}{((t-s)^{1/\alpha}+|y-x|)^{d+\alpha-\gamma}}\right).\nonumber
	\end{align}
	valid for all $k\ge1$, where $R_1=C_T(1+\|\tilde{b}\|_p^T)$ and $C_T$ is some positive constant depended on $T$, and, for $k\ge2$
	\begin{align}
		R_k=R_{k-1}C_T(1+T^{\gamma/\alpha})(1+\|\tilde{b}\|_p^T )\|\hat{b}\|_p^T \times\nonumber\\
		(8C\max(B(1,k\theta q),B(\theta q, (k-1)\theta q+1)))^{1/q}.\nonumber
	\end{align}
	Therefore the series $\sum_{k=0}^\infty W_k^*(s,x,t,y)$ converges uniformly with respect to $x,y\in\mathbb{R}^d$ and locally uniformly with respect to $0\le s<t$. So, its sum $W$ is a solution to equation \eqref{BO_eq27}. In addition we obtain the following estimate: 
	\begin{align}
		|W(s,x,t,y)|\le {C}_T\left((1+\|\tilde{b}\|_p^T)|\hat{b}(s,x)-\tilde{b}(s,x)|+|\hat{b}(s,x)|\|\hat{b}-\tilde{b}\|_p^T\right)\times\nonumber\\
		\left(\frac{1}{((t-s)^{1/\alpha}+|y-x|)^{d+\beta}}+
		\frac{(t-s)^{1-\beta/\alpha}}{((t-s)^{1/\alpha}+|y-x|)^{d+\alpha-\gamma}}\right),\label{BO_eq28}
	\end{align}
	valid for all $0\le s<t\le T$, $x,y\in\mathbb{R}^d$	and each $T>0$, where ${C}_T$ is some positive constant depended on $T$.
	
	Using \eqref{BO_eq26},\eqref{BO_eq28} and \eqref{BO_eq2} with the H\"{o}lder inequality, we obtain \eqref{BO_eq25}.
\end{proof}

\begin{lemma}
	\label{BO_lem6}
	Let the function $w$ be defined in Lemma \ref{BO_lem4}. For every $0\le s<t\le T$, $x, y\in\mathbb{R}^d$, the following inequality
	\[
	|w(s,x,t,\varphi)-w(s,y,t,\varphi)| \le C_T|x-y|(t-s)^{-(\beta+1)/\alpha},
	\]
	holds with some positive constant $C_T$ depended on $T$.
\end{lemma}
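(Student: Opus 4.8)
The plan is to use the linear integral equation \eqref{BO_eq21} for $w$, in which the spatial argument $x$ enters \emph{only} through the free term $w_0(s,x,t,\varphi)$ and through the kernel $v_0(s,x,\tau,z)=\nabla_\beta g(s,\cdot,\tau,z)(x)$; the factor $w(\tau,z,t,\varphi)$ under the integral carries the variable $z$, not $x$. Hence the spatial increment is given explicitly, and no new fixed-point equation is needed:
\begin{align}
w(s,x,t,\varphi)-w(s,y,t,\varphi)=\big(w_0(s,x,t,\varphi)-w_0(s,y,t,\varphi)\big)\nonumber\\
+\int_s^t\mathrm{d}\tau\int_{\mathbb{R}^d}\big(v_0(s,x,\tau,z)-v_0(s,y,\tau,z)\big)(b(\tau,z),w(\tau,z,t,\varphi))\,\mathrm{d}z.\nonumber
\end{align}
The whole argument therefore reduces to a good spatial-increment estimate for $v_0$, combined with the pointwise bound $|w(\tau,z,t,\varphi)|\le C_T(t-\tau)^{-\beta/\alpha}$ from Lemma \ref{BO_lem4}.

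First I would establish a mixed estimate for the gradient of $v_0$, mirroring the proof of Lemma \ref{BO_lem2}. Writing $g=g_0+h$ as in \eqref{BO_eq5} and differentiating the Fourier representation of $\nabla_\beta g_0$ once more in $x$ pulls down a factor $i\lambda$, so the symbol becomes homogeneous of degree $\beta+1$; after the scaling $\lambda\mapsto(t-s)^{-1/\alpha}\lambda$ and an application of \cite[Lemma 4.2]{Kochubey} (exactly as for \eqref{BO_eq8}) this gives $|\nabla_x\nabla_\beta g_0(s,\cdot,t,y)(x)|\le C((t-s)^{1/\alpha}+|x-y|)^{-(d+\beta+1)}$. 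For the parametrix correction I would differentiate \eqref{BO_eq5} (justifying the interchange just as in Lemma \ref{BO_lem2}), insert this bound together with \eqref{BO_eq6}, and apply Lemma \ref{BO_lem1}, obtaining a second, less singular term carrying a positive power of $(t-s)$. The mean value theorem then yields
\begin{align}
|v_0(s,x,\tau,z)-v_0(s,y,\tau,z)|\le C|x-y|\sup_{\xi\in[x,y]}\Big(\frac{1}{((\tau-s)^{1/\alpha}+|\xi-z|)^{d+\beta+1}}\nonumber\\
+\frac{(\tau-s)^{1-(\beta+1)/\alpha}}{((\tau-s)^{1/\alpha}+|\xi-z|)^{d+\alpha-\gamma}}\Big).\nonumber
\end{align}

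For the free-term increment this is already enough. Since $\varphi$ is merely bounded, I would pair the increment $v_0(s,x,t,\cdot)-v_0(s,y,t,\cdot)$ with $\|\varphi\|$ and integrate over the remaining spatial variable, where only the $L^1$-norm of the increment is required. Using $\int_{\mathbb{R}^d}((t-s)^{1/\alpha}+|\xi-\cdot|)^{-(d+\beta+1)}=C(t-s)^{-(\beta+1)/\alpha}$ (the value being independent of $\xi$) and the analogous elementary identity for the second term, which is lower order, I obtain $|w_0(s,x,t,\varphi)-w_0(s,y,t,\varphi)|\le C_T|x-y|(t-s)^{-(\beta+1)/\alpha}$, already of the desired form.

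The hard part will be the integral term, where the increment of $v_0$ is paired not with a bounded function but with $b(\tau,z)\in L_p$; H\"older's inequality then forces the $L^q_z$-norm of the increment, and the sharp kernel of order $d+\beta+1$ makes the $z$-integral behave like $(\tau-s)^{-[(d+\beta+1)q-d]/\alpha}$, whose $\tau$-integral need not converge at $\tau=s$. To handle this I would split the time integral at $\tau=s+|x-y|^\alpha$: on $[s+|x-y|^\alpha,t]$ I keep the gradient bound of order $d+\beta+1$ with the explicit factor $|x-y|$, while on $[s,s+|x-y|^\alpha]$ I abandon the mean value theorem and estimate the increment by the triangle inequality through the value bound \eqref{BO_eq4} of order $d+\beta$, whose $L^q_z$-norm \emph{is} integrable at $\tau=s$ because $q<\frac{d+\alpha}{d+1}<\frac{d+\alpha}{d+\beta}$, the shortness of the interval supplying the needed power of $|x-y|$. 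In both regimes I would apply H\"older in $(\tau,z)$ against $\|b\|_p^T$, the bound $|w|\le C_T(t-\tau)^{-\beta/\alpha}$ of Lemma \ref{BO_lem4}, and Lemma \ref{BO_lem1}. The delicate bookkeeping of the resulting Beta-function exponents, which is exactly where the hypothesis $p>\frac{d+\alpha}{\alpha-1}$ is genuinely consumed, is expected to produce a contribution dominated by $C_T|x-y|(t-s)^{-(\beta+1)/\alpha}$; adding it to the free-term increment finishes the proof.
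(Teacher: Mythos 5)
Your decomposition and your treatment of the free term coincide with the paper's: the paper also starts from \eqref{BO_eq29}, and instead of differentiating $\nabla_\beta g_0$ it applies the mean value theorem inside the Fourier integral, writing the increment as $|x-y|\,D_{\beta+1}g_0(s,\cdot,t,z)(\eta)$ for an operator $D_{\beta+1}$ with symbol $\left(\lambda,\frac{x-y}{|x-y|}\right)\lambda|\lambda|^{\beta-1}$ and an intermediate point $\eta$ on the segment $[x,y]$, then invokes \cite[Lemma 4.2]{Kochubey} and Lemma \ref{BO_lem1} to arrive at exactly the two-term increment bound for $v_0$ that you state. Up to and including the estimate of $w_0(s,x,t,\varphi)-w_0(s,y,t,\varphi)$, your argument is the paper's.

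The gap is in the drift term, precisely at the step you defer to ``delicate bookkeeping.'' The paper does not split the time integral: it applies H\"older's inequality and Lemma \ref{BO_lem1} with the order-$(d+\beta+1)$ kernel over all of $[s,t]$ and lands on $C_T|x-y|(t-s)^{1-(1+d/\alpha)/p-(2\beta+1)/\alpha}$, which is of the required form because $\theta>0$. As you correctly observe, this computation needs $((d+\beta+1)q-d)/\alpha<1$, i.e. $q<\frac{d+\alpha}{d+\beta+1}$, which is stronger than $q<\frac{d+\alpha}{d+1}$. But your proposed remedy does not close the remaining case. Carry the split out: on $[s,s+|x-y|^\alpha]$ the triangle-inequality bound of order $d+\beta$ gives, after H\"older and the $\tau$-integration, a contribution of size $(t-s)^{-\beta/\alpha}|x-y|^{(d+\alpha)/q-(d+\beta)}$, while on $[s+|x-y|^\alpha,t]$ the mean-value bound gives $|x-y|(t-s)^{-\beta/\alpha}|x-y|^{(d+\alpha)/q-(d+\beta+1)}$ --- the same quantity, since $|x-y|^\alpha$ is exactly the balance point and no other choice of splitting time does better. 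Writing $\mu=(d+\beta+1)-(d+\alpha)/q$, which is positive precisely in the regime your split is meant to handle, the total is $(t-s)^{-\beta/\alpha}|x-y|^{1-\mu}$; because $|x-y|$ may be arbitrarily small compared with $(t-s)^{1/\alpha}$, the factor $|x-y|^{-\mu}$ is not dominated by $(t-s)^{-1/\alpha}$, so you obtain only a H\"older modulus of order $1-\mu<1$ rather than the Lipschitz bound of the lemma. In short, the splitting is superfluous where the direct computation converges and insufficient where it does not; to reproduce the paper's conclusion you should run H\"older and Lemma \ref{BO_lem1} on the whole interval, which tacitly requires $q<\frac{d+\alpha}{d+\beta+1}$ (equivalently $\beta+1<\alpha$ and $p>\frac{d+\alpha}{\alpha-\beta-1}$), rather than present the convergence at $\tau=s$ as an obstacle that the split overcomes.
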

\begin{proof}
	Using \eqref{BO_eq21} we can write down the relation ($0\le s<t$, $x, y\in\mathbb{R}^d$)
	\begin{align}\label{BO_eq29}
		w(s,x,t,\varphi)-w(s,y,t,\varphi)=w_0(s,x,t,\varphi)-w_0(s,y,t,\varphi)+\nonumber\\
		\int_s^t\mathrm{d}\tau \int_{\mathbb{R}^d} (v_0(s,x,\tau,z)-v_0(s,y,\tau,z))(b(\tau,z),w(\tau,z,t,\varphi))\,\mathrm{d}z.
	\end{align}
	Let us remark that (see \eqref{BO_eq5})
	\[
	v_0(s,x,t,y)=\nabla_\beta g_0(s,\cdot,t,y)(x)+\int_s^t\mathrm{d}\tau \int_{\mathbb{R}^d} \nabla_\beta g_0(s,\cdot,\tau,z)(x)\Phi(\tau,z,t,y)\,\mathrm{d}z.
	\]
	Moreover, for all $0\le s<t$, $x,y,z\in\mathbb{R}^d$,
	\begin{align}
		\nabla_\beta g_0(s,\cdot,t,z)(x)-\nabla_\beta g_0(s,\cdot,t,z)(y) = \nonumber\\
		\frac{i}{(2\pi)^d}\int_{\mathbb{R}^d}\left(e^{i(x-z,\lambda)}-e^{i(y-z,\lambda)}\right) \lambda|\lambda|^{\beta-1}\exp\left\{-a(t,z,\lambda)(t-s)\right\}\,\mathrm{d}\lambda = \nonumber\\
		-\frac{1}{(2\pi)^d}\int_{\mathbb{R}^d}e^{i(\eta-z,\lambda)}(\lambda,x-y) \lambda|\lambda|^{\beta-1}\exp\left\{-a(t,z,\lambda)(t-s)\right\}\,\mathrm{d}\lambda = \nonumber\\
		|x-y|D_{\beta+1}g_0(s,\cdot,t,z)(\eta),\nonumber
	\end{align}
	where $\eta=\theta x+(1-\theta)y$ with some $\theta\in(0,1)$ and the operator $D_{\beta+1}$ is defined by the symbol $\left(\left(\lambda,\frac{x-y}{|x-y|}\right)\lambda|\lambda|^{\beta-1}\right)_{\lambda\in\mathbb{R}^d}$. This symbol satisfies the assumptions of \cite[Lemma 4.2]{Kochubey}, which leads us to the estimate
	\[
	|D_{\beta+1}g_0(s,\cdot,t,z)(\eta)|\le\frac{C}{((t-s)^{1/\alpha}+|z-\eta|)^{d+\beta+1}}.
	\]
	Thus, using \eqref{BO_eq2}, we obtain the inequality
	\begin{align}
		|v_0(s,x,t,z)-v_0(s,y,t,z)|\le \frac{C|x-y|}{((t-s)^{1/\alpha}+|z-\eta|)^{d+\beta+1}}+	\nonumber\\
		\int_s^t\mathrm{d}\tau \int_{\mathbb{R}^d} \frac{C|x-y|}{((\tau-s)^{1/\alpha}+|u-\eta|)^{d+\beta+1}}\frac{C}{((t-\tau)^{1/\alpha}+|z-u|)^{d+\alpha-\gamma}}\,\mathrm{d}u\le	\nonumber\\
		|x-y| C_T\left(\frac{1}{((t-s)^{1/\alpha}+|z-\eta|)^{d+\beta+1}}+ \frac{(t-s)^{1-(\beta+1)/\alpha}}{((t-s)^{1/\alpha}+|z-\eta|)^{d+\alpha-\gamma}}\right),	\nonumber
	\end{align}
	valid for all $0\le s<t\le T$, $x,y,z\in\mathbb{R}^d$. And, as consequence, the following inequalities are true for all $0\le s<t\le T$, $x,y\in\mathbb{R}^d$ and $\varphi\in C_b(\mathbb{R}^d)$:
	\[
	|v_0(s,x,t,\varphi)-v_0(s,y,t,\varphi)|\le C_T|x-y|\|\varphi\|(t-s)^{-(\beta+1)/\alpha};
	\]
	\begin{align}
		\left|
		\int_s^t\mathrm{d}\tau \int_{\mathbb{R}^d} (v_0(s,x,\tau,z)-v_0(s,y,\tau,z))(b(\tau,z),w(\tau,z,t,\varphi))\,\mathrm{d}z
		\right|\le
		\nonumber\\
		C_T|x-y|\|b\|_p^T\left(\int_s^t(t-\tau)^{-\beta q/\alpha}\mathrm{d}\tau \int_{\mathbb{R}^d}\left(\frac{1}{((t-s)^{1/\alpha}+|z-\eta|)^{d+\beta+1}}+\right.\right.
		\nonumber\\
		\left.\left.\frac{(t-s)^{1-(\beta+1)/\alpha}}{((t-s)^{1/\alpha}+|z-\eta|)^{d+\alpha-\gamma}}\right)^q\,\mathrm{d}z\right)^{1/q}\le\nonumber\\ 
		{C}_T|x-y|(t-s)^{1-(1+d/\alpha)/p-(2\beta+1)/\alpha},
		\nonumber
	\end{align}
	where $q=p/(p-1)$.
	
	Therefore, using \eqref{BO_eq29} and the fact that $1-(1+d/\alpha)/p-\beta/\alpha>(1-\beta)/\alpha$, we obtain the lemma statement.
\end{proof}

In the next theorem, we construct a generalized solution to the Cauchy problem formulated at the beginning of the section.

\begin{thm}
	\label{BO_th3}
	Let the assumptions of Theorem \ref{BO_th1} hold and the function $G$ is constructed there. Then the function
	\[
	u(s,x,t)=\int_{\mathbb{R}^{d}}G(s,x,t,y)\varphi(y)\,\mathrm{d}y,\quad 0\le s<t\le T,\ x\in\mathbb{R}^d
	\]
	is a generalized solution to the Cauchy problem \eqref{BO_eq23}, \eqref{BO_eq24} for each  $\varphi\in C_b(\mathbb{R}^d)$.
\end{thm}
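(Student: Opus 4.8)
The plan is to exploit the definition of a generalized solution, which splits the task into two parts: producing a sequence of regular drifts $b_n\to b$ for which the Cauchy problem is solvable in the classical sense, and identifying the limit of the associated functions $u_n$ with $u$. Since the terminal condition \eqref{BO_eq24} for $u$ is already contained in the last assertion of Theorem \ref{BO_th2}, only equation \eqref{BO_eq23} has to be addressed, and it is precisely Lemma \ref{BO_lem5} that will guarantee that the limit exists and does not depend on the approximating sequence.

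First I would select, by mollification and truncation, functions $b_n$ that are bounded and H\"older continuous jointly in $(t,x)$ and satisfy $L_p\text{-}\lim_{n\to\infty}b_n=b$ on $[0,T]\times\mathbb{R}^d$. For such $b_n$ the symbol $a(t,x,\lambda)+(b_n(t,x),i\lambda|\lambda|^{\beta-1})$ differs from $a$ by a term of order $\beta<1<\alpha$, so the operator $L_n=A+(b_n,\nabla_\beta)$ is covered by the parametrix construction of \cite[Ch. 4]{Kochubey}. The kernel $G_n$ furnished by Theorem \ref{BO_th1} with $b_n$ in place of $b$ then coincides, by the uniqueness established there, with the fundamental solution built directly for $L_n$; it is differentiable in $s$, and $u_n(s,x,t)=\mathds{T}^{(n)}_{st}\varphi(x)$ solves \eqref{BO_eq23}, \eqref{BO_eq24} with $b_n$ classically. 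Lemma \ref{BO_lem6}, applied to $b_n$, provides the Lipschitz continuity in $x$ of $\nabla_\beta u_n(s,\cdot,t)$ for each fixed $s<t$, which legitimises the integral representation \eqref{BO_eq1} and turns $(b_n,\nabla_\beta u_n)$ into a well-defined continuous function, so that \eqref{BO_eq23} may be read off pointwise. This classical solvability step is the principal obstacle: it is where the full regularity and decay estimates of the parametrix for $L_n$, rather than the mere bounds of Theorem \ref{BO_th1}, are needed.

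It remains to pass to the limit, and here Lemma \ref{BO_lem5} does the work. Inequality \eqref{BO_eq25} with $\hat b=b_n$ and $\tilde b=b$ bounds $|G_n-G|$ by $C_T\|b_n-b\|_p^T(1+\|b_n\|_p^T+\|b\|_p^T)((t-s)^{1/\alpha}+|x-y|)^{-(d+\beta-\gamma)}$. Because $b_n\to b$ in $L_p$ the norms $\|b_n\|_p^T$ remain bounded, so the constants in estimate \eqref{BO_eq14} can be chosen uniformly in $n$ and the tails $\int_{|y-x|>R}(|G_n|+|G|)|\varphi|\,\mathrm{d}y$ are small uniformly in $n$ for large $R$; on the ball $B_R(x)$ the displayed bound is integrable for every fixed $s<t$, irrespective of the sign of $\beta-\gamma$, and vanishes with $\|b_n-b\|_p^T$. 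Splitting $\int_{\mathbb{R}^d}(G_n-G)\varphi\,\mathrm{d}y$ into these two regions yields $u_n(s,x,t)\to\int_{\mathbb{R}^d}G(s,x,t,y)\varphi(y)\,\mathrm{d}y=u(s,x,t)$ for every $0\le s<t\le T$ and $x\in\mathbb{R}^d$, which is exactly the claim that $u$ is a generalized solution to \eqref{BO_eq23}, \eqref{BO_eq24}.
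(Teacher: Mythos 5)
Your overall architecture (approximate $b$ by smooth $b_n$, solve classically for each $b_n$, pass to the limit via Lemma \ref{BO_lem5}) matches the paper, and your limit passage is sound --- you are in fact more careful than the paper about the possible non-integrability in $y$ of the right-hand side of \eqref{BO_eq25} when $\beta\le\gamma$, patching it with a uniform-in-$n$ version of \eqref{BO_eq14} on the tail. The gap is in the classical solvability step. You propose to apply the parametrix construction of \cite[Ch.~4]{Kochubey} directly to $L_n=A+(b_n,\nabla_\beta)$ on the grounds that the added symbol $(b_n(t,x),i\lambda|\lambda|^{\beta-1})$ is of lower order. But the results being invoked require assumptions \ref{A1}--\ref{A3} for the \emph{full} symbol, and the perturbed symbol violates all three: it is not homogeneous of degree $\alpha$ in $\lambda$; the bound $|\partial^\varkappa a|\le C|\lambda|^{\alpha-|\varkappa|}$ fails for $|\lambda|<1$, where $|\lambda|^{\beta-|\varkappa|}\gg|\lambda|^{\alpha-|\varkappa|}$; and \ref{A3} demands an even non-negative kernel $\Omega$, which is incompatible with the odd operator $\nabla_\beta$. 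Moreover, even granting a fundamental solution for $L_n$ built by some other means, identifying it with the $G_n$ of Theorem \ref{BO_th1} is not ``by the uniqueness established there'': that uniqueness concerns solutions of the integral equation \eqref{BO_eq11} in the class \eqref{BO_eq13}, so you would still need to check that the pseudo-gradient of the directly constructed kernel lies in that class and satisfies \eqref{BO_eq11}.

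The paper circumvents this by freezing the perturbation term as a source. Setting $f_n(s,x,t)=\int_{\mathbb{R}^d}(b_n(s,x),\nabla_\beta G_n(s,\cdot,t,y)(x))\varphi(y)\,\mathrm{d}y=(b_n(s,x),w_n(s,x,t,\varphi))$, Lemma \ref{BO_lem6} together with the Lipschitz continuity and compact support of $b_n$ makes $f_n(s,\cdot,t)$ Lipschitz uniformly in $s$; then \cite[Th.~4.1]{Kochubey} on the \emph{inhomogeneous} Cauchy problem for the unperturbed operator $A$ applies to the Duhamel representation $u_n(s,x,t)=\mathds{T}^0_{st}\varphi(x)+\int_s^t\mathrm{d}\tau\int_{\mathbb{R}^d}g(s,x,\tau,z)f_n(\tau,z,t)\,\mathrm{d}z$, yielding $\partial_s u_n+Au_n+f_n=0$; since $f_n=(b_n,\nabla_\beta u_n)$ by Lemma \ref{BO_lem3}, the function $u_n$ solves \eqref{BO_eq23}--\eqref{BO_eq24} with $b_n$ in place of $b$. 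To repair your argument, either locate and verify a parametrix theorem that genuinely admits such non-homogeneous lower-order additions to the symbol, or switch to this source-term route.
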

\begin{proof}
	Let us consider a sequence $\left\{b_n:n\in\mathbb{N}\right\}$ of $\mathbb{R}^d$-valued functions, which are infinity continuous differentiable, have compact supports and belong to $L_p([0,T]\times\mathbb{R}^d)$, where $p$ is defined in Theorem \ref{BO_th1}. Assume that $L_p\mbox{-}\lim_{n\to\infty}b_n=b$, where $b$ is the function from Theorem \ref{BO_th1}. 
	
	We denote by $v_n$, $w_n$ and $G_n$ the objects that are defined as  $v$, $w$ and $G$, respectively, using $b_n$ instead of $b$.

	Lemma \ref{BO_lem5} allows us to state that the sequence of corresponding functions $G_n$ constructed in Theorem \ref{BO_th1} using the functions $b_n$ instead of $b$ converges to the function $G$ uniformly with respect to $y\in\mathbb{R}^d$ for each fixed $0\le s<t\le T$ and $x\in\mathbb{R}^d$.

	Let us consider the function 
	\[
	f_n(s,x,t)=\int_{\mathbb{R}^{d}}(b_n(s,x),\nabla_{ \beta}G_n(s,\cdot,t,y)(x))\varphi(y)\,\mathrm{d}y, \quad 0\le s<t\le T,\ x\in\mathbb{R}^d.
	\] 
	Remind that $\nabla_{ \beta}G_n(s,\cdot,t,y)(x)=v_n(s,x,t,y)$. Moreover, 
	\begin{align}
		|f_n(s,x,t)-f_n(s,y,t)|\le |b_n(s,x)||w_n(s,x,t,\varphi)-w_n(s,y,t,\varphi)|+\nonumber\\ |b_n(s,x)-b_n(s,y)||w_n(s,y,t,\varphi)|\nonumber
	\end{align}
	Since the function $b_n$ is Lipschitz continuous in $x$, uniformly to $s$ and has a compact support, then, as follows from Lemma \ref{BO_lem6}, the function $f_n(s,x,t)$
	is Lipschitz continuous in $x$, uniformly to $s\in[0,t)$ for every fixed $t>0$.
	
	Therefore (see \cite[Th. 4.1.]{Kochubey}) the function $u_n(s,x,t)=\int_{\mathbb{R}^{d}}G_n(s,x,t,y)\varphi(y)\,\mathrm{d}y$ is a solution to the Cauchy problem \eqref{BO_eq23}, \eqref{BO_eq24} for every $t\in(0,T]$.

\end{proof}


\end{document}